\date{\empty}
\numberwithin{equation}{section} \theoremstyle{plain}
\newtheorem*{thm*}{Main Theorem}
\newtheorem{theorem}{Theorem}[section]
\newtheorem{corollary}[theorem]{Corollary}
\newtheorem*{corollary*}{Corollary}
\newtheorem*{claim*}{Claim}
\newtheorem{lemma}[theorem]{Lemma}
\newtheorem*{lemma*}{Lemma}
\newtheorem*{proposition*}{Proposition}
\newtheorem*{remark*}{Remark}
\newtheorem*{example*}{Example}
\newtheorem*{question*}{Question}
\newtheorem{definition}[theorem]{Definition}
\newtheorem*{definition*}{Definition}
\newtheorem*{acknowledgements*}{ACKNOWLEDGEMENTS}
\begin{document}
\begin{center}
{\large  \bf The Moore-Penrose inverse in rings with involution}\\
\vspace{0.8cm} {\small \bf Sanzhang  Xu and Jianlong Chen}
\footnote{Corresponding author.
E-mail: jlchen@seu.edu.cn
E-mail: xusanzhang5222@126.com}

\vspace{0.6cm} {\rm Department of Mathematics, Southeast University, Nanjing 210096, China}
\end{center}

\bigskip

{ \bf  Abstract:}  \leftskip0truemm\rightskip0truemm
Let $R$ be a unital ring with involution.
In this paper, several new necessary and sufficient conditions for the existence of the Moore-Penrose inverse of an element in a ring $R$ are given.
In addition, the formulae of the Moore-Penrose inverse of an element in a ring are presented.

{ \textbf{Key words:}}  Moore-Penrose inverse, Group inverse, EP element, Normal element.

{ \textbf{AMS subject classifications:}}  15A09, 16W10, 16U80.
 \bigskip

\section { \bf Introduction}

Let $R$ be a $*$-ring, that is a ring with an involution $a\mapsto a^*$
satisfying $(a^*)^*=a$, $(ab)^*=b^*a^*$ and $(a+b)^*=a^*+b^*$.
We say that $b\in R$ is the Moore-Penrose inverse of $a\in R$, if the following hold:
$$aba=a, \quad bab=b, \quad (ab)^{\ast}=ab \quad (ba)^{\ast}=ba.$$
There is at most one $b$ such that above four equations hold.
If such an element $b$ exists, it is denoted by $a^{\dagger}$. The set of all Moore-Penrose invertible elements will be denoted by $R^{\dagger}$.
An element $b\in R$ is an inner inverse of $a\in R$ if $aba=a$ holds. The set of all inner inverses of $a$ will be denoted by $a\{1\}$.
An element $a\in R$ is said to be group invertible
if there exists $b\in R$ such that the following equations hold:
$$aba=a, \quad bab=b, \quad ab=ba.$$
The element $b$ which satisfies the above equations is called a group inverse of $a$.
If such an element $b$ exists, it is unique and denoted by $a^\#$. The set of all group invertible elements will be denoted by $R^\#$.

An element $a\in R$ is called an idempotent if $a^{2}=a.$
$a$ is called a projection if $a^{2}=a=a^{\ast}.$
$a$ is called normal if $aa^{\ast}=a^{\ast}a$.
$a$ is called a Hermite element if $a^{\ast}=a$.
$a$ is said to be an EP element if $a\in R^{\dagger}\cap R^\#$ and $a^{\dagger}=a^\#.$ The set of all EP elements will be denoted by $R^{EP}$.
$\tilde{a}$ is called a $\{1,3\}$-inverse of $a$ if we have $a\tilde{a}a=a,~(a\tilde{a})^{\ast}=a\tilde{a}$. The set of all $\{1,3\}$-invertible elements will be denoted by $R^{\{1,3\}}$. Similarly, an element $\hat{a}\in R$  is called a $\{1,4\}$-inverse of $a$ if $a\hat{a}a=a,~(\hat{a}a)^{\ast}=\hat{a}a$. The set of all $\{1,4\}$-invertible elements will be denoted by $R^{\{1,4\}}$.

We will also use the following notations: $aR=\{ax\mid x\in R\}$, $Ra=\{xa\mid x\in R\}$, $^{\circ}a=\{x\in R\mid xa=0\}$ and $a^{\circ}=\{x\in R\mid ax=0\}$.

In \cite{C,MDA,MDB,TW}, the authors showed that the equivalent conditions such that $a\in R$ to be an EP element are closely related with
powers of the group and Moore-Penrose inverse of $a$.
 Motivated by the above statements, in this paper, we will show that the existence of the Moore-Penrose inverse of an element in a ring $R$
is closely related with powers of some Hermite elements, idempotents and projections.

Recently, Zhu, Chen and Patri\'{c}io in \cite{CZ} introduced the concepts of left $\ast$-regular and right $\ast$-regular.
We call an element $a\in R$ is left (right) $\ast$-regular if there exists $x\in R$ such that $a=aa^{\ast}ax$ $(a=xaa^{\ast}a)$.
They proved that $a\in R^{\dagger}$ if and only if $a$ is left $\ast$-regular if and only if $a$ is right $\ast$-regular. With the help
of left (right) $\ast$-regular, we will give more equivalent conditions
of the Moore-Penrose of an element in a ring.

In \cite{H}, Hartwig proved that for an element $a\in R$, $a$ is $\{1,3\}$-invertible with $\{1,3\}$-inverse $x$ if and only if $x^{\ast}a^{\ast}a=a$
and it also proved that $a$ is $\{1,4\}$-invertible with $\{1,4\}$-inverse $y$ if and only if $aa^{\ast}y^{\ast}=a.$
In \cite{P}, Penrose proved the following result in the complex matrix case, yet it is true for an element in a ring with involution, $a\in R^{\dagger}$ if and only if $a\in Ra^{\ast}a\cap aa^{\ast}R$.
In this case, $a^{\dagger}=y^{\ast}ax^{\ast}$, where $a=aa^{\ast}y=xa^{\ast}a.$

It is well-known that an important feature of the Moore-Penrose inverse is that it can be used to represent projections.
Let $a\in R^{\dagger}$, then if we let $p=aa^{\dagger}$ and $q=a^{\dagger}a$, we have
$p$ and $q$ are projections. In \cite{HC}, Han and Chen proved that
$a\in R^{\{1,3\}}$ if and only if there exists unique projection $p\in R$ such that $aR=pR.$
And, it is also proved that
$a\in R^{\{1,4\}}$ if and only if there exists unique projection $q\in R$ such that $Ra=Rq.$
We will show that the existence of the Moore-Penrose inverse
is closely related with some Hermite elements and projections.

In \cite[Theorem 2.4]{K}, Koliha proved that $a\in \mathcal{A}^{\dagger}$ if and only if $a$ is well-supported, where $\mathcal{A}$ is a C$^{\ast}$-algebra.
In \cite[Theorem 1]{KDC}, Koliha, Djordjevi\'{c} and Cvetkvi\'{c} proved that $a\in R^{\dagger}$ if and only if $a$ is left $\ast$-cancellable and well-supported. Where an element
$a\in R$ is called well-supported if there exists projection $p\in R$ such that $ap=a$ and $a^{\ast}a+1-p\in R^{-1}$. In Theorem \ref{mp-in-ring-semigroup5}, we will show that the condition that $a$ is left $\ast$-cancellable in \cite[Theorem 1]{KDC} can be dropped. Moreover, we prove that $a\in R^{\dagger}$ if and only if there exists $e^{2}=e \in R$ such that $ea=0$ and $aa^{\ast}+e$ is left invertible. And, it is also proved that $a\in R^{\dagger}$ if and only if there exists $b \in R$ such that $ba=0$ and $aa^{\ast}+b$ is left invertible.

In \cite{H}, Hartwig proved that $a\in R^{\{1,3\}}$ if and only if $R=aR\oplus (a^{\ast})^{\circ}$. And, it is also proved that
$a\in R^{\{1,4\}}$ if and only if $R=Ra\oplus ^{\circ}\!(a^{\ast})$.
Hence $a\in R^{\dagger}$ if and only if $R=aR\oplus (a^{\ast})^{\circ}=Ra\oplus ^{\circ}\!(a^{\ast})$.
We will show that $a\in R^{\dagger}$ if and only if
$R=a^{\circ}\oplus (a^{\ast}a)^{n}R$. It is also show that $a\in R^{\dagger}$ if and only if $R=a^{\circ}+ (a^{\ast}a)^{n}R$
, for all choices $n\in \mathbf{N^{+}}$, where $\mathbf{N^{+}}$ stands for the set of all positive integers.
\section { \bf Preliminaries}

In this section, several auxiliary lemmas are presented.

\begin{lemma} \cite{H}\label{13-14-inverse}
Let $a\in R$. Then we have the following results:\\
$(1)$ $a$ is $\{1,3\}$-invertible with $\{1,3\}$-inverse $x$ if and only if $x^{\ast}a^{\ast}a=a;$\\
$(2)$ $a$ is $\{1,4\}$-invertible with $\{1,4\}$-inverse $y$ if and only if $aa^{\ast}y^{\ast}=a.$
\end{lemma}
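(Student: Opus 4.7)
The plan is to prove each implication in part (1) by a direct algebraic manipulation using the involution, and then obtain part (2) either by the same method with the roles of left and right swapped, or by observing that $y$ is a $\{1,4\}$-inverse of $a$ if and only if $y^{\ast}$ is a $\{1,3\}$-inverse of $a^{\ast}$, so (2) follows from (1) applied to $a^{\ast}$.

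For part (1), the forward direction should be immediate: assuming $axa=a$ and $(ax)^{\ast}=ax$, I can rewrite $ax$ as $x^{\ast}a^{\ast}$ in the first equation, giving $a=axa=x^{\ast}a^{\ast}a$. So that direction is just using the Hermitian condition to substitute inside $axa=a$.

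The reverse direction is where the real content lies. Starting from $x^{\ast}a^{\ast}a=a$, I would multiply on the right by $x$ to obtain $ax=x^{\ast}a^{\ast}ax=(ax)^{\ast}(ax)$. The key observation is that the right-hand side $(ax)^{\ast}(ax)$ is manifestly self-adjoint, hence so is $ax$, giving $(ax)^{\ast}=ax$. With $(ax)^{\ast}=ax$ in hand, I then recover $axa=(ax)^{\ast}a=x^{\ast}a^{\ast}a=a$, completing the proof that $x\in a\{1,3\}$.

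Part (2) proceeds by the exact same mechanism with left and right interchanged: from $aa^{\ast}y^{\ast}=a$ one takes adjoints to get $yaa^{\ast}=a^{\ast}$, then multiplies on the left by $y$ to recognize $ya=yaa^{\ast}y^{\ast}=(ya)(ya)^{\ast}$ as self-adjoint, whence $(ya)^{\ast}=ya$ and finally $aya=a(ya)^{\ast}=aa^{\ast}y^{\ast}=a$. The main (minor) obstacle is simply spotting the right multiplication that turns the hypothesis into an explicitly self-adjoint expression; once that is seen, both Penrose-type conditions $aza=a$ and $(az)^{\ast}=az$ (or $(za)^{\ast}=za$) follow in one stroke.
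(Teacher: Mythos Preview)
The paper does not supply its own proof of this lemma; it is quoted from Hartwig \cite{H} and used as a black box. Your argument is correct and is precisely the standard proof: the forward direction is an immediate substitution using $(ax)^{\ast}=ax$, and for the converse the key trick of multiplying $x^{\ast}a^{\ast}a=a$ on the right by $x$ to obtain $ax=(ax)^{\ast}(ax)$, hence $(ax)^{\ast}=ax$ and then $axa=a$, is exactly the intended one-line computation. Your derivation of part (2), either by duality or by the mirror-image calculation, is likewise correct.
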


The following two Lemmas can be found in \cite{P} in the complex matrix case, yet it is true for an element in a ring with involution.

\begin{lemma} \label{mp-1314}
Let $a\in R$. Then $a\in R^{\dagger}$ if and only if there exist $x,y\in R$ such that $x^{\ast}a^{\ast}a=a$ and $aa^{\ast}y^{\ast}=a.$ In this case,
$a^{\dagger}=yax.$
\end{lemma}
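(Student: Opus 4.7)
The plan is to extract both directions from Lemma~\ref{13-14-inverse} and then verify the formula by a short direct computation.

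For necessity, suppose $a\in R^{\dagger}$. Since $a^{\dagger}$ is in particular a $\{1,3\}$-inverse and a $\{1,4\}$-inverse of $a$, Lemma~\ref{13-14-inverse} (applied to $x=a^{\dagger}$ and then to $y=a^{\dagger}$) yields $(a^{\dagger})^{\ast}a^{\ast}a=a$ and $aa^{\ast}(a^{\dagger})^{\ast}=a$, so one can take $x=y=a^{\dagger}$.

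For sufficiency, assume $x^{\ast}a^{\ast}a=a$ and $aa^{\ast}y^{\ast}=a$. Then Lemma~\ref{13-14-inverse}(1) says $x$ is a $\{1,3\}$-inverse of $a$, i.e.\ $axa=a$ and $(ax)^{\ast}=ax$, while Lemma~\ref{13-14-inverse}(2) says $y$ is a $\{1,4\}$-inverse of $a$, i.e.\ $aya=a$ and $(ya)^{\ast}=ya$. In particular $a\in R^{\{1,3\}}\cap R^{\{1,4\}}$, and it is standard (and will be confirmed by the formula) that this forces $a\in R^{\dagger}$.

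To pin down the formula, I would set $b=yax$ and verify the four Penrose equations by exploiting the two collapses $ab=a(yax)=(aya)x=ax$ and $ba=(yax)a=y(axa)=ya$. These immediately give $(ab)^{\ast}=(ax)^{\ast}=ax=ab$ and $(ba)^{\ast}=(ya)^{\ast}=ya=ba$; moreover $aba=(ab)a=axa=a$ and $bab=b(ab)=(yax)(ax)=y(axa)x=yax=b$ using $axa=a$. Hence $b$ satisfies the defining relations of $a^{\dagger}$, and by uniqueness $a^{\dagger}=yax$.

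The argument has no real obstacle; the only thing to be careful about is to use each hypothesis where it collapses a triple product to $a$, and to read off $ab=ax$, $ba=ya$ before attempting the self-adjointness checks. Once these two reductions are in hand the rest is automatic, and in particular no extra appeal to Penrose's original characterization is needed beyond Lemma~\ref{13-14-inverse}.
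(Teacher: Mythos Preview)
Your proof is correct. The paper does not supply its own proof of this lemma---it is stated without proof and attributed to Penrose (with the remark that the complex-matrix argument carries over to rings with involution)---so there is nothing to compare against directly; that said, your verification that $b=yax=a^{(1,4)}aa^{(1,3)}$ satisfies the four Penrose equations via the collapses $ab=ax$ and $ba=ya$ is exactly the standard argument, and the paper itself invokes the formula $a^{\dagger}=a^{(1,4)}aa^{(1,3)}$ in precisely this spirit in the proof of Theorem~\ref{mp-in-ring-semigroup1}.
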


\begin{lemma} \label{normal-ep}
Let $a\in R^{\dagger}$. Then:\\
$(1)$ $aa^{\ast}, a^{\ast}a\in R^{EP}$ and $(aa^{\ast})^{\dagger}=(a^{\ast})^{\dagger}a^{\dagger}$ and $(a^{\ast}a)^{\dagger}=a^{\dagger}(a^{\ast})^{\dagger}$;\\
$(2)$ If $a$ is normal, then $a\in R^{EP}$ and $(a^{k})^{\dagger}=(a^{\dagger})^{k}$ for $k\in \mathbf{N^{+}}$.
\end{lemma}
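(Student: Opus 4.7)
\medskip

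\noindent\textbf{Proof plan.}

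For part (1), the plan is to verify directly the four Moore--Penrose equations for $b:=(a^{\dagger})^{\ast}a^{\dagger}$ as a candidate MP-inverse of $aa^{\ast}$. The whole calculation will be driven by two facts inherited from $a\in R^{\dagger}$: namely $aa^{\dagger}$ and $a^{\dagger}a$ are projections, which after taking adjoints give the rewriting rules
\[
a^{\ast}(a^{\dagger})^{\ast}=a^{\dagger}a,\qquad (a^{\dagger})^{\ast}a^{\ast}=aa^{\dagger}.
\]
Using these I would first show $aa^{\ast}\cdot b=aa^{\dagger}$ and $b\cdot aa^{\ast}=aa^{\dagger}$; both are manifestly Hermitian, so MP-equations (3) and (4) come for free. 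Then the equations $aa^{\ast} b aa^{\ast}=aa^{\ast}$ and $b aa^{\ast} b=b$ reduce, via the identity $aa^{\dagger}a=a$, to routine one-line checks. This simultaneously gives $(aa^{\ast})^{\dagger}=(a^{\ast})^{\dagger}a^{\dagger}$ and the commutation $aa^{\ast}\cdot(aa^{\ast})^{\dagger}=aa^{\dagger}=(aa^{\ast})^{\dagger}\cdot aa^{\ast}$, so $aa^{\ast}\in R^{EP}$. The statement for $a^{\ast}a$ is obtained by replacing $a$ with $a^{\ast}$ throughout and using $(a^{\ast})^{\dagger}=(a^{\dagger})^{\ast}$, which itself follows by applying $\ast$ to the four MP equations defining $a^{\dagger}$.

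For part (2), the strategy is to leverage part (1). Since $aa^{\ast}=a^{\ast}a$, the two formulas for $(aa^{\ast})^{\dagger}$ and $(a^{\ast}a)^{\dagger}$ obtained in (1) must coincide, giving the crucial identity
\[
(a^{\dagger})^{\ast}a^{\dagger}=a^{\dagger}(a^{\dagger})^{\ast}.
\]
Sandwiching this between $a$ on the left and $a$ on the right and then using the rewriting rules above (so that the two sides collapse to $a^{\dagger}a$ and $aa^{\dagger}$, respectively) yields $a^{\dagger}a=aa^{\dagger}$. Combined with $aa^{\dagger}a=a$ and $a^{\dagger}aa^{\dagger}=a^{\dagger}$, this shows $a^{\dagger}$ is also a group inverse of $a$, hence $a\in R^{EP}$.

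Once commutation $a a^{\dagger}=a^{\dagger}a$ is in hand, the formula $(a^{k})^{\dagger}=(a^{\dagger})^{k}$ is again a direct verification of the four MP-axioms: because $a$ and $a^{\dagger}$ commute, $a^{k}(a^{\dagger})^{k}=(aa^{\dagger})^{k}=aa^{\dagger}$ (idempotent), and likewise $(a^{\dagger})^{k}a^{k}=aa^{\dagger}$, so symmetry and the two product identities fall out immediately. I expect the main obstacle to lie in part (2): producing the equality $a^{\dagger}a=aa^{\dagger}$ from normality is not entirely routine, and the trick of extracting it from $(aa^{\ast})^{\dagger}=(a^{\ast}a)^{\dagger}$ via part (1) and then sandwiching by $a$ is the step that has to be spotted; everything else is mechanical bookkeeping with the involution.
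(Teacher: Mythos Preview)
The paper does not actually prove this lemma; it is listed in the Preliminaries section as a known fact (attributed to Penrose in the complex matrix case and asserted to hold in $*$-rings), with no argument given. So there is no ``paper's proof'' to compare against.

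Your plan for part~(1) is correct and standard: with $b=(a^{\dagger})^{\ast}a^{\dagger}$ the identities $a^{\ast}(a^{\dagger})^{\ast}=a^{\dagger}a$ and $(a^{\dagger})^{\ast}a^{\ast}=aa^{\dagger}$ give $aa^{\ast}b=aa^{\dagger}=baa^{\ast}$, from which all four Moore--Penrose relations and the EP property follow at once, and the dual statement for $a^{\ast}a$ is obtained by replacing $a$ with $a^{\ast}$.

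For part~(2), your key idea---using part~(1) together with $aa^{\ast}=a^{\ast}a$ to force $(a^{\dagger})^{\ast}a^{\dagger}=a^{\dagger}(a^{\dagger})^{\ast}$, and from this deducing $aa^{\dagger}=a^{\dagger}a$---is sound, but the ``sandwich between $a$ on the left and $a$ on the right'' does not simplify via your stated rewriting rules (those rules involve $a^{\ast}$, not $a$, adjacent to $(a^{\dagger})^{\ast}$). The clean way to finish is to recall from your own part~(1) computation that $aa^{\ast}\,(aa^{\ast})^{\dagger}=aa^{\dagger}$ and $a^{\ast}a\,(a^{\ast}a)^{\dagger}=a^{\dagger}a$; normality makes the two left-hand sides equal, hence $aa^{\dagger}=a^{\dagger}a$. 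After that, your verification of $(a^{k})^{\dagger}=(a^{\dagger})^{k}$ via commutation is fine.
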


We will give a generalization of Lemma \ref{normal-ep}(1) in the following lemma.
\begin{lemma} \label{mp-star-ep}
Let $a\in R^{\dagger}$ and $n, m\in \mathbf{N^+}$. Then $(aa^{\ast})^{n}, (a^{\ast}a)^{m}\in R^{EP}$.
\end{lemma}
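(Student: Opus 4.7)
The plan is to bootstrap twice from Lemma \ref{normal-ep}, exploiting the fact that every power of a Hermite element is again Hermite (hence normal). I would only write the argument for $(aa^{\ast})^{n}$; the case $(a^{\ast}a)^{m}$ is symmetric.

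First I would record the easy observation that $aa^{\ast}$ is Hermite, since $(aa^{\ast})^{\ast}=aa^{\ast}$, and therefore normal. By Lemma \ref{normal-ep}(1) we already know $aa^{\ast}\in R^{\dagger}$. So $aa^{\ast}$ fits the hypotheses of Lemma \ref{normal-ep}(2), and applying that lemma gives $(aa^{\ast})^{n}\in R^{\dagger}$ with $((aa^{\ast})^{n})^{\dagger}=((aa^{\ast})^{\dagger})^{n}$. This is the first bootstrap, which promotes Moore-Penrose invertibility from $aa^{\ast}$ to every positive power of it.

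Next I would apply Lemma \ref{normal-ep}(2) a second time, now with $aa^{\ast}$ replaced by $(aa^{\ast})^{n}$. This is legal because $((aa^{\ast})^{n})^{\ast}=((aa^{\ast})^{\ast})^{n}=(aa^{\ast})^{n}$, so $(aa^{\ast})^{n}$ is itself Hermite and hence normal, and it belongs to $R^{\dagger}$ by the previous step. The conclusion of Lemma \ref{normal-ep}(2) then yields $(aa^{\ast})^{n}\in R^{EP}$, which is exactly what we want. An entirely analogous two-step application with $aa^{\ast}$ replaced by $a^{\ast}a$ (and $n$ by $m$) gives $(a^{\ast}a)^{m}\in R^{EP}$.

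There is no real obstacle here: the only content is recognising that Hermite elements are normal and that the class of Hermite elements is closed under taking powers, so that Lemma \ref{normal-ep}(2) can be invoked twice. The proof is essentially bookkeeping, and no further identity needs to be checked by hand.
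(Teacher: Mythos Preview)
Your argument is correct, and it is genuinely shorter than what the paper does. The paper also starts from Lemma~\ref{normal-ep}, extracting from it the identities $((aa^{\ast})^{n})^{\dagger}=((aa^{\ast})^{\dagger})^{n}$, $(aa^{\ast})^{\dagger}=(a^{\ast})^{\dagger}a^{\dagger}$ and $aa^{\ast}(aa^{\ast})^{\dagger}=(aa^{\ast})^{\dagger}aa^{\ast}$, but then it verifies by hand the four Moore--Penrose equations and the commuting relation for the pair $(aa^{\ast})^{n}$, $((aa^{\ast})^{\dagger})^{n}$ (five displayed computations). Your double invocation of Lemma~\ref{normal-ep}(2) sidesteps all of that: the first application promotes $aa^{\ast}\in R^{\dagger}$ to $(aa^{\ast})^{n}\in R^{\dagger}$, and the second, applied to the Hermite element $(aa^{\ast})^{n}$ itself, immediately yields $(aa^{\ast})^{n}\in R^{EP}$. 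The trade-off is that the paper's explicit check also records the formula $((aa^{\ast})^{n})^{\dagger}=((aa^{\ast})^{\dagger})^{n}$ together with the commutation identity, which are used later (e.g.\ in Lemma~\ref{n-regular} and Theorem~\ref{mp-in-ring-semigroup1}); your route obtains these too, but only as a by-product of the first bootstrap, so you might want to state them explicitly if you plan to cite them downstream.
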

\begin{proof}
Suppose $a\in R^{\dagger}$, by Lemma \ref{normal-ep} and $(aa^{\ast})^{\ast}=aa^{\ast}$, we have
$((aa^{\ast})^{n})^{\dagger}=((aa^{\ast})^{\dagger})^{n}$, $((a^{\ast}a)^{n})^{\dagger}=((a^{\ast}a)^{\dagger})^{n},$
$(aa^{\ast})^{\dagger}=(a^{\ast})^{\dagger}a^{\dagger}$, $(a^{\ast}a)^{\dagger}=a^{\dagger}(a^{\ast})^{\dagger},$
$aa^{\ast}(aa^{\ast})^{\dagger}=(aa^{\ast})^{\dagger}aa^{\ast}$ and $a^{\ast}a(a^{\ast}a)^{\dagger}=(a^{\ast}a)^{\dagger}a^{\ast}a.$
Thus we have
\begin{enumerate}
  \item [$(i)$] $(aa^{\ast})^{n}((aa^{\ast})^{n})^{\dagger}(aa^{\ast})^{n}
=(aa^{\ast})^{n}((aa^{\ast})^{\dagger})^{n}(aa^{\ast})^{n}
=(aa^{\ast}(a^{\ast})^{\dagger}a^{\dagger}aa^{\ast})^{n}
=(aa^{\ast})^{n};$
  \item [$(ii)$] $((aa^{\ast})^{n})^{\dagger}(aa^{\ast})^{n}((aa^{\ast})^{n})^{\dagger}
=((aa^{\ast})^{\dagger})^{n}(aa^{\ast})^{n}((aa^{\ast})^{\dagger})^{n}
=((aa^{\ast})^{n})^{\dagger};$
  \item [$(iii)$] $[(aa^{\ast})^{n}((aa^{\ast})^{n})^{\dagger}]^{\ast}
=[(aa^{\ast})^{n}((aa^{\ast})^{\dagger})^{n}]^{\ast}
=[(aa^{\ast}(aa^{\ast})^{\dagger})^{n}]^{\ast}
=(aa^{\ast})^{n}((aa^{\ast})^{n})^{\dagger};$
 \item [$(iv)$] $[((aa^{\ast})^{n})^{\dagger}(aa^{\ast})^{n}]^{\ast}
=[((aa^{\ast})^{\dagger})^{n}(aa^{\ast})^{n}]^{\ast}
=[((aa^{\ast})^{\dagger}aa^{\ast})^{n}]^{\ast}
=((aa^{\ast})^{n})^{\dagger}(aa^{\ast})^{n};$
 \item [$(v)$] $(aa^{\ast})^{n}((aa^{\ast})^{n})^{\dagger}
=(aa^{\ast})^{n}((aa^{\ast})^{\dagger})^{n}
=(aa^{\ast}(aa^{\ast})^{\dagger})^{n}
=((aa^{\ast})^{\dagger}aa^{\ast})^{n}
=((aa^{\ast})^{n})^{\dagger}(aa^{\ast})^{n}.$
\end{enumerate}

By the definition of the EP element, we have $(aa^{\ast})^{n}\in R^{EP}$.
Similarly, $(a^{\ast}a)^{m}\in R^{EP}$.
\end{proof}

\begin{definition}\label{cancell}
An element $a\in R$ is $\ast$-cancellable if
$a^{\ast}ax=0$ implies $ax=0$ and $yaa^{\ast}=0$ implies $ya=0$.
\end{definition}

The equivalence of conditions $(1)$, $(3)$ and $(5)$ in the following lemma was also proved by Puystjens and Robinson \cite[Lemma 3]{PR} in categories with involution.
\begin{lemma} \cite{KP}\label{star-mp}
Let $a\in R$. Then the following conditions are equivalent:\\
$(1)$ $a\in R^{\dagger}$;\\
$(2)$ $a^{\ast}\in R^{\dagger}$;\\
$(3)$ $a$ is $\ast$-cancellable and $aa^{\ast}$ and $a^{\ast}a$ are regular;\\
$(4)$ $a$ is $\ast$-cancellable and $a^{\ast}aa^{\ast}$ is regular;\\
$(5)$ $a\in Ra^{\ast}a\cap aa^{\ast}R$.
\end{lemma}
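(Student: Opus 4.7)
The plan is to establish the implications in the pattern $(1)\Leftrightarrow(2)$, $(1)\Leftrightarrow(5)$, $(1)\Rightarrow(3)\Rightarrow(5)$, and $(1)\Rightarrow(4)\Rightarrow(3)$, which together give the full equivalence. The equivalence $(1)\Leftrightarrow(2)$ is immediate: if $b=a^{\dagger}$, applying the involution to the four Penrose equations shows that $b^{\ast}$ is the Moore--Penrose inverse of $a^{\ast}$. The equivalence $(1)\Leftrightarrow(5)$ is just a restatement of Lemma \ref{mp-1314}, since $x^{\ast}a^{\ast}a=a$ says exactly $a\in Ra^{\ast}a$ and $aa^{\ast}y^{\ast}=a$ says exactly $a\in aa^{\ast}R$.

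For $(1)\Rightarrow(3)$, $\ast$-cancellability follows by inserting the projection identity $aa^{\dagger}=(aa^{\dagger})^{\ast}$: from $a^{\ast}ax=0$ one obtains $ax=aa^{\dagger}ax=(a^{\dagger})^{\ast}(a^{\ast}ax)=0$, and the symmetric argument using $a^{\dagger}a=(a^{\dagger}a)^{\ast}$ handles $yaa^{\ast}=0$. Regularity of $aa^{\ast}$ and $a^{\ast}a$ is free from Lemma \ref{normal-ep}, which supplies them with Moore--Penrose (hence inner) inverses.

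The heart of the argument is $(3)\Rightarrow(5)$. Given inner inverses $b$ of $aa^{\ast}$ and $c$ of $a^{\ast}a$, rewrite $a^{\ast}aca^{\ast}a=a^{\ast}a$ as $a^{\ast}a(ca^{\ast}a-1)=0$, and use $\ast$-cancellability to deduce $a(ca^{\ast}a-1)=0$, giving $a=aca^{\ast}a\in Ra^{\ast}a$. Symmetrically, from $aa^{\ast}baa^{\ast}=aa^{\ast}$ we write $(aa^{\ast}b-1)aa^{\ast}=0$, apply the dual form of $\ast$-cancellability to obtain $(aa^{\ast}b-1)a=0$, and conclude $a=aa^{\ast}ba\in aa^{\ast}R$.

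Finally, $(1)\Rightarrow(4)$ is handled by a short verification: using $aa^{\dagger}=(aa^{\dagger})^{\ast}$ and $a^{\dagger}a=(a^{\dagger}a)^{\ast}$, the element $(a^{\dagger})^{\ast}a^{\dagger}(a^{\dagger})^{\ast}$ is an inner inverse of $a^{\ast}aa^{\ast}$. For $(4)\Rightarrow(3)$, the regularity equation $a^{\ast}aa^{\ast}ca^{\ast}aa^{\ast}=a^{\ast}aa^{\ast}$ factors on the left as $a^{\ast}a(a^{\ast}ca^{\ast}aa^{\ast}-a^{\ast})=0$, so $\ast$-cancellability yields $aa^{\ast}(ca^{\ast})aa^{\ast}=aa^{\ast}$; factoring on the right as $(a^{\ast}aa^{\ast}ca^{\ast}-a^{\ast})aa^{\ast}=0$ similarly produces $a^{\ast}a(a^{\ast}c)a^{\ast}a=a^{\ast}a$. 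The main obstacle throughout is precisely this bookkeeping in $(3)\Rightarrow(5)$ and $(4)\Rightarrow(3)$: one has to massage each regularity equation into the exact form $a^{\ast}a(\cdots)=0$ or $(\cdots)aa^{\ast}=0$ so that $\ast$-cancellability applies cleanly, after which every remaining step is mechanical substitution.
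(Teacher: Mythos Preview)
Your proof is correct. Note, however, that the paper does not actually prove this lemma: it is quoted from \cite{KP} (with the remark that the equivalence of $(1)$, $(3)$ and $(5)$ also appears in \cite{PR}) and is stated without proof as a preliminary result. Your argument supplies a clean, self-contained justification along the standard lines---the key steps $(3)\Rightarrow(5)$ and $(4)\Rightarrow(3)$ are exactly the expected manipulations, rewriting each regularity identity so that $\ast$-cancellability strips off the outer $a^{\ast}a$ or $aa^{\ast}$---and nothing in it conflicts with how the paper uses the lemma downstream.
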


\begin{lemma}\label{n-regular}
Let $a\in R^{\dagger}$ and $n,m\in \mathbf{N^+}$. Then\\
$(1)$ $(aa^{\ast})^{n}((aa^{\ast})^{n})^{\dagger}a=a$;\\
$(2)$ $a((a^{\ast}a)^{m})^{\dagger}(a^{\ast}a)^{m}=a$.
\end{lemma}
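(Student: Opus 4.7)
The plan is to collapse both expressions to the familiar projections $aa^{\dagger}$ and $a^{\dagger}a$, and then conclude by the defining identity $aa^{\dagger}a=a$.

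For (1), I would first note that $aa^{\ast}$ is Hermite and Moore-Penrose invertible, so Lemma \ref{normal-ep}(2) applied to $aa^{\ast}$ gives $((aa^{\ast})^{n})^{\dagger}=((aa^{\ast})^{\dagger})^{n}$, and Lemma \ref{normal-ep}(1) gives $(aa^{\ast})^{\dagger}=(a^{\ast})^{\dagger}a^{\dagger}$. Since $aa^{\ast}$ is EP, it commutes with $(aa^{\ast})^{\dagger}$, so the powers can be interleaved to give
$$(aa^{\ast})^{n}((aa^{\ast})^{n})^{\dagger}=\bigl(aa^{\ast}(aa^{\ast})^{\dagger}\bigr)^{n}.$$
The single factor then simplifies using $(a^{\ast})^{\dagger}=(a^{\dagger})^{\ast}$:
$$aa^{\ast}(aa^{\ast})^{\dagger}=aa^{\ast}(a^{\dagger})^{\ast}a^{\dagger}=a(a^{\dagger}a)^{\ast}a^{\dagger}=aa^{\dagger}aa^{\dagger}=aa^{\dagger}.$$
Because $aa^{\dagger}$ is idempotent, $(aa^{\dagger})^{n}=aa^{\dagger}$, whence $(aa^{\ast})^{n}((aa^{\ast})^{n})^{\dagger}a=aa^{\dagger}a=a$.

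Part (2) follows by an entirely parallel computation on the opposite side: one checks $(a^{\ast})^{\dagger}a^{\ast}=(aa^{\dagger})^{\ast}=aa^{\dagger}$, so $(a^{\ast}a)^{\dagger}a^{\ast}a=a^{\dagger}(a^{\ast})^{\dagger}a^{\ast}a=a^{\dagger}(aa^{\dagger})a=a^{\dagger}a$, and then by commutativity of the EP element $a^{\ast}a$ with its Moore-Penrose inverse,
$$((a^{\ast}a)^{m})^{\dagger}(a^{\ast}a)^{m}=\bigl((a^{\ast}a)^{\dagger}a^{\ast}a\bigr)^{m}=(a^{\dagger}a)^{m}=a^{\dagger}a,$$
so the left multiplication by $a$ yields $a\cdot a^{\dagger}a=a$.

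There is no serious obstacle here; the argument is essentially a bookkeeping exercise built on Lemma \ref{normal-ep}. The only point requiring mild care is the interleaving of the $n$th powers of $aa^{\ast}$ and its Moore-Penrose inverse, which is legitimate precisely because $aa^{\ast}$ (respectively $a^{\ast}a$) is EP and hence commutes with its Moore-Penrose inverse.
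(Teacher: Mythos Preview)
Your argument is correct, and it is cleaner than the paper's own proof. The paper proceeds by induction on $n$: the base case $n=1$ is obtained from $aa^{\ast}(aa^{\ast})^{\dagger}aa^{\ast}=aa^{\ast}$ together with the $\ast$-cancellability of $a$ (available via Lemma~\ref{star-mp}), and the inductive step is a somewhat lengthy chain of identities that unwinds $(aa^{\ast})^{k+1}[(aa^{\ast})^{k+1}]^{\dagger}a$ back to $a$ using repeated applications of $(a^{\dagger})^{\ast}=(a^{\ast})^{\dagger}$ and the Penrose equations. Your approach bypasses the induction entirely by recognising at once that the projection $(aa^{\ast})^{n}((aa^{\ast})^{n})^{\dagger}$ collapses to $(aa^{\ast}(aa^{\ast})^{\dagger})^{n}=(aa^{\dagger})^{n}=aa^{\dagger}$, after which the conclusion is immediate. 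The key leverage you gain is using the EP property of $aa^{\ast}$ (and the power formula from Lemma~\ref{normal-ep}(2)) to interleave powers, which the paper in fact already establishes in the course of proving Lemma~\ref{mp-star-ep} but does not exploit here. Both routes rely on the same background lemma, but yours identifies the projection explicitly and so avoids both the induction and the appeal to $\ast$-cancellability.
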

\begin{proof}
$(1)$ If $n=1$ and $aa^{\ast}(aa^{\ast})^{\dagger}aa^{\ast}=aa^{\ast}$, by $a$ is $\ast$-cancellable, we have
$aa^{\ast}(aa^{\ast})^{\dagger}a=a.$
Suppose if $n=k$, we have
$(aa^{\ast})^{k}((aa^{\ast})^{k})^{\dagger}a=a.$
By Lemma \ref{normal-ep}, we have
\begin{equation*}
\begin{split}\label{n-regular77}
&(aa^{\ast})^{k+1}[(aa^{\ast})^{k+1}]^{\dagger}a\\
&~=aa^{\ast}(aa^{\ast})^{k}[(aa^{\ast})^{\dagger}]^{k+1}a
=aa^{\ast}(aa^{\ast})^{k}[(aa^{\ast})^{\dagger}]^{k}(aa^{\ast})^{\dagger}a\\
&~=aa^{\ast}(aa^{\ast})^{k}[(aa^{\ast})^{\dagger}]^{k}(a^{\ast})^{\dagger}a^{\dagger}a
=aa^{\ast}(aa^{\ast})^{k}[(aa^{\ast})^{\dagger}]^{k}(a^{\dagger})^{\ast}a^{\dagger}a\\
&~=aa^{\ast}(aa^{\ast})^{k}[(aa^{\ast})^{\dagger}]^{k}(a^{\dagger}aa^{\dagger})^{\ast}a^{\dagger}a
=aa^{\ast}(aa^{\ast})^{k}[(aa^{\ast})^{\dagger}]^{k}aa^{\dagger}(a^{\dagger})^{\ast}a^{\dagger}a\\
&~=aa^{\ast}aa^{\dagger}(a^{\dagger})^{\ast}a^{\dagger}a
=aa^{\ast}(aa^{\dagger})^{\ast}(a^{\dagger})^{\ast}(a^{\dagger}a)^{\ast}\\
&~=a(aa^{\dagger}a)^{\ast}(a^{\dagger}aa^{\dagger})^{\ast}
=aa^{\ast}(a^{\dagger})^{\ast}\\
&~=a(a^{\dagger}a)^{\ast}
=aa^{\dagger}a
=a.
\end{split}
\end{equation*}
Thus by mathematical induction, we have $(aa^{\ast})^{n}((aa^{\ast})^{n})^{\dagger}a=a$.

$(2)$ It is similar to $(1)$.
\end{proof}

\begin{lemma}\cite{CZ}\label{aa-star-a}
Let $a\in R$. The following conditions are equivalent:\\
$(1)$ $a\in R^{\dagger}$;\\
$(2)$ $a\in aa^{\ast}aR$;\\
$(3)$ $a\in Raa^{\ast}a$.

In this case, $a^{\dagger}=(ax)^{\ast}axa^{\ast}=a^{\ast}ya(ya)^{\ast}$, where $a=aa^{\ast}ax=yaa^{\ast}a.$
\end{lemma}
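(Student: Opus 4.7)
The plan is to prove the four implications $(1) \Rightarrow (2)$, $(1) \Rightarrow (3)$, $(2) \Rightarrow (1)$, $(3) \Rightarrow (1)$, and then to derive both representation formulas by a single application of Lemma \ref{mp-1314}.

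For $(1) \Rightarrow (2)$ I would exhibit the explicit witness $x = a^{\dagger}(a^{\dagger})^{\ast}$, motivated by viewing it as a formal right inverse of $aa^{\ast}a$. The key reduction is
$$aa^{\dagger}(a^{\dagger})^{\ast} = (a^{\dagger})^{\ast}a^{\ast}(a^{\dagger})^{\ast} = (a^{\dagger})^{\ast},$$
where the first equality uses $aa^{\dagger} = (aa^{\dagger})^{\ast} = (a^{\dagger})^{\ast}a^{\ast}$ and the second is the Moore-Penrose identity for $a^{\ast}$ (with inverse $(a^{\dagger})^{\ast}$). Then
$$aa^{\ast}a\cdot a^{\dagger}(a^{\dagger})^{\ast} = aa^{\ast}(a^{\dagger})^{\ast} = a(a^{\dagger}a)^{\ast} = aa^{\dagger}a = a.$$
The implication $(1) \Rightarrow (3)$ is handled symmetrically with the witness $y = (a^{\dagger})^{\ast}a^{\dagger}$.

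For $(2) \Rightarrow (1)$, starting from $a = aa^{\ast}ax = aa^{\ast}\cdot(ax)$, I apply Lemma \ref{13-14-inverse}(2) to recognise $x^{\ast}a^{\ast}$ as a $\{1,4\}$-inverse of $a$. The defining identity $a(x^{\ast}a^{\ast})a = a$ rewrites as $a = ax^{\ast}a^{\ast}a$, placing $a \in Ra^{\ast}a$, while the hypothesis directly gives $a \in aa^{\ast}R$. Lemma \ref{star-mp}(5) then yields $a \in R^{\dagger}$. The implication $(3) \Rightarrow (1)$ is dual: starting from $a = yaa^{\ast}a = (ya)\cdot a^{\ast}a$, one uses Lemma \ref{13-14-inverse}(1) to identify $a^{\ast}y^{\ast}$ as a $\{1,3\}$-inverse, derives $a = aa^{\ast}y^{\ast}a \in aa^{\ast}R$, and again invokes Lemma \ref{star-mp}(5).

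For the formulas, I would feed the pair $(x^{\prime}, y^{\prime}) = (xa^{\ast},\, x^{\ast}a^{\ast})$ into Lemma \ref{mp-1314}: the checks $(x^{\prime})^{\ast}a^{\ast}a = ax^{\ast}a^{\ast}a = a$ (from the $\{1,4\}$-identity above) and $aa^{\ast}(y^{\prime})^{\ast} = aa^{\ast}ax = a$ (by hypothesis) produce
$$a^{\dagger} = y^{\prime}ax^{\prime} = x^{\ast}a^{\ast}\cdot a\cdot xa^{\ast} = (ax)^{\ast}axa^{\ast},$$
and the second formula $a^{\dagger} = a^{\ast}ya(ya)^{\ast}$ follows from the dual computation starting from $a = yaa^{\ast}a$. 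The main obstacle is guessing the ansatz $x = a^{\dagger}(a^{\dagger})^{\ast}$ for $(1) \Rightarrow (2)$ and spotting that the collapse $aa^{\dagger}(a^{\dagger})^{\ast} = (a^{\dagger})^{\ast}$ is a concealed instance of the Moore-Penrose identities applied to $a^{\ast}$; the remaining steps are mechanical applications of Lemmas \ref{13-14-inverse}, \ref{mp-1314}, and \ref{star-mp}.
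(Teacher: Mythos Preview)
Your argument is correct. Note, however, that the paper does not supply its own proof of this lemma: it is quoted from \cite{CZ} and used as a black box. So there is no in-paper proof to compare against; what one can say is that your proof is entirely self-contained within the toolkit the paper has already assembled (Lemmas~\ref{13-14-inverse}, \ref{mp-1314}, \ref{star-mp}).

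A small remark on streamlining: in your $(2)\Rightarrow(1)$ step you pass through Lemma~\ref{star-mp}(5), but you have in fact already done more. Once you know $x^{\ast}a^{\ast}$ is a $\{1,4\}$-inverse and hence $a=ax^{\ast}\cdot a^{\ast}a$, Lemma~\ref{13-14-inverse}(1) immediately gives that $(ax^{\ast})^{\ast}=xa^{\ast}$ is a $\{1,3\}$-inverse, so Lemma~\ref{mp-1314} applies directly with the pair $(xa^{\ast},\,x^{\ast}a^{\ast})$ and yields both $a\in R^{\dagger}$ and the formula $a^{\dagger}=(ax)^{\ast}axa^{\ast}$ in one stroke. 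This is exactly what you do in the ``formulas'' paragraph anyway, so the detour through Lemma~\ref{star-mp}(5) is redundant---the existence and the representation can be obtained simultaneously. The dual remark applies to $(3)\Rightarrow(1)$.
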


\begin{lemma} \cite{PP}\label{eppp}
Let $a\in R.$ If $aR=a^\ast R$, then the following are equivalent:\\
$(1)$ $a\in R^{EP}$;\\
$(2)$ $a\in R^{\dagger}$;\\
$(3)$ $a\in R^\#.$
\end{lemma}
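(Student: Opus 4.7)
The plan is to first observe that the implications $(1) \Rightarrow (2)$ and $(1) \Rightarrow (3)$ are immediate from the definition of an EP element. The substantive work is $(2) \Rightarrow (1)$ and $(3) \Rightarrow (1)$. Before either, I note that the hypothesis $aR = a^{\ast}R$ is self-dual under the involution: applying $\ast$ to both sides yields the two-sided companion $Ra = Ra^{\ast}$. This lets me extract, for each direction, both a right factorization $a^{\ast} = ay$ and a left factorization $a^{\ast} = wa$ for suitable $y, w \in R$.

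For $(2) \Rightarrow (1)$, starting from $a \in R^{\dagger}$ together with $a^{\ast} = ay$, I would multiply by $aa^{\dagger}$ on the left to get $aa^{\dagger}a^{\ast} = a^{\ast}$, and then take adjoints using $(aa^{\dagger})^{\ast} = aa^{\dagger}$ to obtain $a^{2}a^{\dagger} = a$. Symmetrically, $a^{\ast} = wa$ yields $a^{\dagger}a^{2} = a$. Writing $p = aa^{\dagger}$ and $q = a^{\dagger}a$, I multiply $a^{2}a^{\dagger} = a$ on the left by $a^{\dagger}$ to get $q = qp$, and multiply $a^{\dagger}a^{2} = a$ on the right by $a^{\dagger}$ to get $p = qp$. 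Hence $p = q$, so $a^{\dagger}$ commutes with $a$ and therefore serves as a group inverse. Thus $a \in R^{EP}$ with $a^{\#} = a^{\dagger}$.

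For $(3) \Rightarrow (1)$, I would analyze the idempotent $e = aa^{\#} = a^{\#}a$. Routine checks give $eR = aR$ and $Re = Ra$. Combining these with the hypothesis and its involutive dual yields $Re = Ra = Ra^{\ast} = (eR)^{\ast} = Re^{\ast}$, so there exist $r, s \in R$ with $e = re^{\ast}$ and $e^{\ast} = se$. Using $(e^{\ast})^{2} = e^{\ast}$ I compute $ee^{\ast} = re^{\ast}e^{\ast} = re^{\ast} = e$. Applying $\ast$ to $ee^{\ast} = e$ produces $ee^{\ast} = e^{\ast}$, and comparing with $ee^{\ast} = e$ forces $e = e^{\ast}$. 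Hence the idempotent $aa^{\#}$ is actually a projection, which supplies the two remaining Moore-Penrose conditions; so $a \in R^{\dagger}$ with $a^{\dagger} = a^{\#}$, and $a \in R^{EP}$.

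The main obstacle is conceptual rather than computational: recognizing that the one-sided hypothesis $aR = a^{\ast}R$ is equivalent, via the involution, to having both $aR = a^{\ast}R$ and $Ra = Ra^{\ast}$, and that both halves are needed to force commutation of the candidate inverse with $a$. Once that symmetry is in play, the proofs reduce to straightforward manipulation of the defining identities.
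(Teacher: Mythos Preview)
The paper does not supply its own proof of this lemma; it is quoted from \cite{PP} without argument. Your proof is correct and self-contained. The key observation that $aR=a^{\ast}R$ entails $Ra=Ra^{\ast}$ via the involution is exactly what is needed, and your computations in both $(2)\Rightarrow(1)$ and $(3)\Rightarrow(1)$ are clean: in the first case you correctly deduce $aa^{\dagger}=a^{\dagger}a$ from $a^{2}a^{\dagger}=a=a^{\dagger}a^{2}$, and in the second you correctly show the spectral idempotent $e=aa^{\#}$ is self-adjoint from $Re=Re^{\ast}$. Since the paper offers no proof to compare against, your argument stands on its own and in fact supplies what the paper omits.
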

\section { \bf  Main results}

In this section, several necessary and sufficient conditions for the existence of the Moore-Penrose inverse of an element in a ring $R$ are given.

\begin{theorem}\label{mp-in-ring-semigroup1}
Let $a\in R$ and $m,n\in \mathbf{N^+}$. Then the following conditions are equivalent:\\
$(1)$ $a\in R^{\dagger}$;\\
$(2)$ $a\in R(a^{\ast}a)^{m}\cap (aa^{\ast})^{n}R$;\\
$(3)$ $a\in a(a^{\ast}a)^{n}R$;\\
$(4)$ $a\in R(aa^{\ast})^{n}a$;\\
$(5)$ $(aa^{\ast})^{n}\in R^{\dagger}$ and $(aa^{\ast})^{n}[(aa^{\ast})^{n}]^{\dagger}a=a$;\\
$(6)$ $(a^{\ast}a)^{n}\in R^{\dagger}$ and $a[(a^{\ast}a)^{n}]^{\dagger}(a^{\ast}a)^{n}=a$;\\
$(7)$ $a$ is $\ast$-cancellable and $(aa^{\ast})^{m}$ and $(a^{\ast}a)^{n}$ are regular;\\
$(8)$ $a$ is $\ast$-cancellable and $(a^{\ast}a)^{n}a^{\ast}$ is regular;\\
$(9)$ $a$ is $\ast$-cancellable and $a^{\ast}(aa^{\ast})^{n}$ is regular;\\
$(10)$ $a$ is $\ast$-cancellable and $(aa^{\ast})^{n}\in R^\#$;\\
$(11)$ $a$ is $\ast$-cancellable and $(a^{\ast}a)^{n}\in R^\#$;\\
$(12)$ $a$ is $\ast$-cancellable and $(aa^{\ast})^{n}\in R^{\dagger}$;\\
$(13)$ $a$ is $\ast$-cancellable and $(a^{\ast}a)^{n}\in R^{\dagger}$.\\
In this case, \\
$a^{\dagger}=y_{1}^{\ast}(aa^{\ast})^{m+n-2}ax_{1}^{\ast}
=x_{2}^{\ast}(a^{\ast}a)^{2n-1}x_{2}a^{\ast}
=a^{\ast}y_{2}(aa^{\ast})^{2n-1}y_{2}^{\ast}$,
where $a=x_{1}(a^{\ast}a)^{m}$, $a=(aa^{\ast})^{n}y_{1}$,
$a=a(a^{\ast}a)^{n}x_{2}$, $a=y_{2}(aa^{\ast})^{n}a$.
\end{theorem}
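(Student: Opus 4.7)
The plan is to take condition (1) as the hub and establish the equivalence of each of (2)--(13) with it; the forward implications from (1) are mostly routine consequences of the preliminary lemmas, while the converses---particularly for the one-sided conditions (7)--(13)---will be driven by an iterated $*$-cancellation peeling argument.

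For the equivalence of (1)--(6): from (1), Lemma~\ref{mp-star-ep} gives $(aa^*)^n, (a^*a)^m \in R^{EP}$, and Lemma~\ref{n-regular} directly yields (5) and (6). The EP commutation $(aa^*)^n[(aa^*)^n]^\dagger = [(aa^*)^n]^\dagger(aa^*)^n$, together with the elementary swap $(aa^*)^n a = a(a^*a)^n$, then produces $a \in R(a^*a)^m \cap (aa^*)^n R$, $a \in a(a^*a)^n R$, and $a \in R(aa^*)^n a$, giving (2), (3), (4). Conversely, (3) and (4) absorb into the middle of $aa^*a$, so Lemma~\ref{aa-star-a} applies; (2) supplies both $a \in aa^*R$ and $a \in Ra^*a$ for Lemma~\ref{star-mp}(5); and (5)---symmetrically (6)---yields $a \in (aa^*)^n R \subseteq aa^*R$ together with $a = [(aa^*)^n]^\dagger a(a^*a)^n \in R(a^*a)^n \subseteq Ra^*a$ via the same EP commutation, again triggering Lemma~\ref{star-mp}(5).

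For the equivalence of (1) with (7)--(13), the forward implications follow from Lemmas~\ref{normal-ep}, \ref{mp-star-ep}, and~\ref{star-mp}, combined with the observation that a Hermite Moore-Penrose-invertible element is automatically EP (hence group-invertible). The converses all rest on the following peeling principle, proved by induction from Definition~\ref{cancell}: if $a$ is $*$-cancellable then for every $k \geq 1$,
\[
(aa^*)^k Y = 0 \;\Longrightarrow\; a^* Y = 0, \qquad Y(aa^*)^k = 0 \;\Longrightarrow\; Ya = 0,
\]
together with the mirror statements obtained by exchanging $a \leftrightarrow a^*$. Applied to $(aa^*)^m u (aa^*)^m = (aa^*)^m$ this yields $a^*(1 - u(aa^*)^m) = 0$, hence $a = (aa^*)^m u^* a \in aa^*R$; symmetrically $(a^*a)^n$ regular gives $a \in Ra^*a$, so Lemma~\ref{star-mp}(5) finishes (7)$\Rightarrow$(1). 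For (8), (9), multiplying the regularity equation by $a$ on the appropriate side converts it into the form $[1 - \cdots](aa^*)^{n+1} = 0$, whose right-peeling lands $a$ in $a(a^*a)^n R$, i.e., condition (3) or (4). For (10)--(13), Hermite plus MP- or group-invertibility again gives EP, so the generalized inverse commutes with $(aa^*)^n$; peeling then yields both $a \in (aa^*)^n R$ and, via that commutation, $a \in R(a^*a)^n$, reducing to case (5) or (6).

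Finally, the three formulas for $a^\dagger$ are direct substitutions. Writing $a = x_1(a^*a)^{m-1}\cdot a^*a$ and $a = aa^*\cdot(aa^*)^{n-1}y_1$ and applying Lemma~\ref{mp-1314} with $x^* = x_1(a^*a)^{m-1}$ and $y^* = (aa^*)^{n-1}y_1$ gives $a^\dagger = yax$; simplifying via $a(a^*a)^{m-1} = (aa^*)^{m-1}a$ yields $y_1^*(aa^*)^{m+n-2}ax_1^*$. The second and third formulas come from Lemma~\ref{aa-star-a} applied to $a = aa^*a\cdot(a^*a)^{n-1}x_2$ and $a = y_2(aa^*)^{n-1}\cdot aa^*a$ respectively, after the same swap identity. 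The main obstacle I anticipate is executing the peeling cleanly: one must track precisely which of the two cancellation rules in Definition~\ref{cancell} is being invoked at each step, pass through $*$ at the right moments, and in cases (8)--(9) combine a left-peeling with a right-peeling so as to land in (3) or (4).
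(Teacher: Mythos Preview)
Your proposal is correct and follows essentially the same strategy as the paper: the preliminary Lemmas \ref{13-14-inverse}--\ref{aa-star-a} handle the equivalences among (1)--(6), and iterated $*$-cancellation (your ``peeling principle'') drives the converses for (7)--(13). The only differences are tactical routing choices. The paper proves $(8)\Rightarrow(7)$ by cancelling on both sides of the regularity equation to exhibit inner inverses for $(a^{\ast}a)^{n}$ and $(aa^{\ast})^{n}$ simultaneously, whereas you go $(8)\Rightarrow(4)$; and for (10)--(13) the paper bundles their mutual equivalence via Lemma~\ref{eppp} and closes the loop through $(12)\Rightarrow(9)$, while you feed each of them directly into (5) or (6) using the EP commutation. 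Both routings rely on the same two ingredients, so neither is materially simpler. One small slip to fix when you write it up: in your sketch of $(8)\Rightarrow$(3)/(4), multiplying the regularity of $(a^{\ast}a)^{n}a^{\ast}$ on the right by $a$ yields $[1-\cdots](a^{\ast}a)^{n+1}=0$, not $(aa^{\ast})^{n+1}$; after peeling and taking the involution you land in $R(aa^{\ast})^{n}a$, i.e.\ condition~(4), rather than $a(a^{\ast}a)^{n}R$.
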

\begin{proof}
$(1)\Rightarrow(2)$ By Lemma \ref{n-regular} we can get
$(aa^{\ast})^{n}((aa^{\ast})^{n})^{\dagger}a=a$
and
$a((a^{\ast}a)^{m})^{\dagger}(a^{\ast}a)^{m}=a.$
Thus we have
$a\in R(a^{\ast}a)^{m}\cap (aa^{\ast})^{n}R$.

$(2)\Rightarrow(1)$ Suppose $a\in R(a^{\ast}a)^{m}\cap (aa^{\ast})^{n}R$, then for some $x_{1},y_{1}\in R$, we have
\begin{eqnarray}\label{mp3}
a=x_{1}(a^{\ast}a)^{m}~~\text{and}~~a=(aa^{\ast})^{n}y_{1}.
\end{eqnarray}
By (\ref{mp3}) and Lemma \ref{13-14-inverse}, we have
\begin{eqnarray}\label{mp4}
[x_{1}(a^{\ast}a)^{m-1}]^{\ast}\in a\{1,3\}~~\text{and}~~[(aa^{\ast})^{n-1}y_{1}]^{\ast}\in a\{1,4\}.
\end{eqnarray}
Thus by (\ref{mp4}) and Lemma \ref{mp-1314}, we have $a\in R^{\dagger}$ and
\begin{equation*} \label{nnmp5}
\begin{split}
a^{\dagger}
&=a^{(1,4)}aa^{(1,3)}
=[(aa^{\ast})^{n-1}y_{1}]^{\ast}a[x_{1}(a^{\ast}a)^{m-1}]^{\ast}\\
&=y_{1}^{\ast}(aa^{\ast})^{n-1}a(a^{\ast}a)^{m-1}x_{1}^{\ast}
=y_{1}^{\ast}(aa^{\ast})^{m+n-2}ax_{1}^{\ast}.
\end{split}
\end{equation*}

$(1)\Rightarrow(3)$ By Lemma \ref{normal-ep}, we have
$a^{\ast}a=a^{\ast}aa^{\dagger}(a^{\dagger})^{\ast}a^{\ast}a$ and
$a^{\dagger}(a^{\dagger})^{\ast}a^{\ast}a=a^{\ast}aa^{\dagger}(a^{\dagger})^{\ast}.$
Thus
\begin{equation*} \label{nnmp8}
\begin{split}
a
&~=aa^{\dagger}a=(aa^{\dagger})^{\ast}a=(a^{\dagger})^{\ast}a^{\ast}a
=(a^{\dagger})^{\ast}a^{\ast}aa^{\dagger}(a^{\dagger})^{\ast}a^{\ast}a
=(a^{\dagger})^{\ast}(a^{\ast}a)^{2}a^{\dagger}(a^{\dagger})^{\ast}\\
&~=((a^{\dagger})^{\ast}a^{\ast}a)a^{\ast}aa^{\dagger}(a^{\dagger})^{\ast}
~=(aa^{\dagger}a)a^{\ast}aa^{\dagger}(a^{\dagger})^{\ast}\\
&~=aa^{\ast}aa^{\dagger}(a^{\dagger})^{\ast}
=a(a^{\ast}aa^{\dagger}(a^{\dagger})^{\ast}a^{\ast}a)a^{\dagger}(a^{\dagger})^{\ast}
=a(a^{\ast}a)^{2}(a^{\dagger}(a^{\dagger})^{\ast})^{2}\\
&~=\cdots\cdots\\
&~=a(a^{\ast}a)^{n}(a^{\dagger}(a^{\dagger})^{\ast})^{n}.
\end{split}
\end{equation*}
Hence $a\in a(a^{\ast}a)^{n}R$.

$(3)\Rightarrow(1)$
Suppose $a\in a(a^{\ast}a)^{n}R$, then for some $x_{2}\in R$ we have
$a\in a(a^{\ast}a)^{n}x_{2}=aa^{\ast}a(a^{\ast}a)^{n-1}x_{2}\in aa^{\ast}aR.$
Thus by Lemma \ref{aa-star-a}, we have $a\in R^{\dagger}$ and
$$
a^{\dagger}=[a(a^{\ast}a)^{n-1}x_{2}]^{\ast}a(a^{\ast}a)^{n-1}x_{2}a^{\ast}
=x_{2}^{\ast}(a^{\ast}a)^{n-1}a^{\ast}a(a^{\ast}a)^{n-1}x_{2}a^{\ast}
=x_{2}^{\ast}(a^{\ast}a)^{2n-1}x_{2}a^{\ast}.
$$

$(1)\Leftrightarrow(4)$ It is similar to $(1)\Leftrightarrow(3)$ and suppose $a=y_{2}(aa^{\ast})^{n}a$ for some $y_{2}\in R$,
by Lemma \ref{aa-star-a}, we have
$
a^{\dagger}=a^{\ast}y_{2}(aa^{\ast})^{n-1}a[y_{2}(aa^{\ast})^{n-1}a]^{\ast}
=a^{\ast}y_{2}(aa^{\ast})^{n-1}aa^{\ast}(aa^{\ast})^{n-1}y_{2}^{\ast}
=a^{\ast}y_{2}(aa^{\ast})^{2n-1}y_{2}^{\ast}.
$

$(1)\Rightarrow(5)$  It is easy to see that by Lemma \ref{mp-star-ep} and Lemma \ref{n-regular}.

$(1)\Rightarrow(6)$  It is similar to $(1)\Rightarrow(5)$.

$(5)\Rightarrow(4)$  Suppose $(aa^{\ast})^{n}\in R^{\dagger}$ and $(aa^{\ast})^{n}((aa^{\ast})^{n})^{\dagger}a=a$.
Let $b=(aa^{\ast})^{n}[(aa^{\ast})^{n}]^{\dagger}$, then $b^{\ast}=b$ and $ba=a$. Thus
\begin{eqnarray*}\label{mp9}
a=ba=b^{\ast}a=[(aa^{\ast})^{n}((aa^{\ast})^{n})^{\dagger}]^{\ast}a=((aa^{\ast})^{n})^{\dagger}(aa^{\ast})^{n}a\in R(aa^{\ast})^{n}a,
\end{eqnarray*}
which imply the condition $(4)$ is satisfied.

$(6)\Rightarrow(3)$  It is similar to $(5)\Rightarrow(4)$.

$(1)\Rightarrow(7)$  It is easy to see that by Lemma \ref{mp-star-ep}.

$(7)\Rightarrow(1)$  Suppose $a$ is $\ast$-cancellable and $(aa^{\ast})^{m}$ and $(a^{\ast}a)^{n}$ are regular. Then
$a^{\ast}$ is $\ast$-cancellable and $(aa^{\ast})^{m}((aa^{\ast})^{m})^{-}(aa^{\ast})^{m}=(aa^{\ast})^{m}$, thus
$(aa^{\ast})^{m}((aa^{\ast})^{m})^{-}(aa^{\ast})^{m-1}a=(aa^{\ast})^{m-1}a$. If $m-1=0$, then 
$(aa^{\ast})^{m}((aa^{\ast})^{m})^{-}a=a$, that is $a=aa^{\ast}(aa^{\ast})^{m-1}((aa^{\ast})^{m})^{-}a$, thus by Lemma
\ref{13-14-inverse}, we have $a\in R^{\{1,4\}}$. If $m-1>0$, then by $a^{\ast}$ is $\ast$-cancellable, we have
$(aa^{\ast})^{m}((aa^{\ast})^{m})^{-}(aa^{\ast})^{m-2}=(aa^{\ast})^{m-2}a$.
If $m-2=0$, then
$(aa^{\ast})^{m}((aa^{\ast})^{m})^{-}a=a$, that is $a=aa^{\ast}(aa^{\ast})^{m-1}((aa^{\ast})^{m})^{-}a$, thus by Lemma
\ref{13-14-inverse}, we have $a\in R^{\{1,4\}}$. If $m-2>0$, repeat above steps, we always have $a\in R^{\{1,4\}}$.
Similarly, by $(a^{\ast}a)^{n}$ is regular, we always have $a\in R^{\{1,3\}}$. Therefore, by Lemma \ref{mp-1314}, we have $a\in R^{\dagger}$.

$(1)\Rightarrow(8)$ By Lemma \ref{mp-star-ep}, we have $(a^{\ast}a)^{n}\in R^{EP}$ and $((a^{\ast}a)^{n})^{\dagger}=(a^{\dagger}(a^{\ast})^{\dagger})^{n}$.
Let $c=(a^{\dagger})^{\ast}((a^{\ast}a)^{\dagger})^{n}$, then
\begin{equation*}
\begin{split}
(a^{\ast}a)^{n}a^{\ast}c(a^{\ast}a)^{n}a^{\ast}
&=(a^{\ast}a)^{n}a^{\ast}(a^{\dagger})^{\ast}((a^{\ast}a)^{\dagger})^{n}(a^{\ast}a)^{n}a^{\ast}\\
&=(a^{\ast}a)^{n}[a^{\ast}(a^{\dagger})^{\ast}(a^{\ast}a)^{\dagger}]((a^{\ast}a)^{\dagger})^{n-1}(a^{\ast}a)^{n}a^{\ast}\\
&=(a^{\ast}a)^{n}[a^{\ast}(a^{\dagger})^{\ast}a^{\dagger}(a^{\ast})^{\dagger}]((a^{\ast}a)^{\dagger})^{n-1}(a^{\ast}a)^{n}a^{\ast}\\
&=(a^{\ast}a)^{n}[a^{\dagger}aa^{\dagger}(a^{\ast})^{\dagger}]((a^{\ast}a)^{\dagger})^{n-1}(a^{\ast}a)^{n}a^{\ast}\\
&=(a^{\ast}a)^{n}(a^{\ast}a)^{\dagger}((a^{\ast}a)^{\dagger})^{n-1}(a^{\ast}a)^{n}a^{\ast}\\
&=(a^{\ast}a)^{n}((a^{\ast}a)^{\dagger})^{n}(a^{\ast}a)^{n}a^{\ast}\\
&=(a^{\ast}a)^{n}((a^{\ast}a)^{n})^{\dagger}(a^{\ast}a)^{n}a^{\ast}\\
&=(a^{\ast}a)^{n}a^{\ast}.
\end{split}
\end{equation*}
Thus $(a^{\ast}a)^{n}a^{\ast}$ is regular.

$(8)\Rightarrow(7)$ Suppose $a$ is $\ast$-cancellable and $(a^{\ast}a)^{n}a^{\ast}$ is regular.
Then $$(a^{\ast}a)^{n}a^{\ast}((a^{\ast}a)^{n}a^{\ast})^{-}(a^{\ast}a)^{n}a^{\ast}=(a^{\ast}a)^{n}a^{\ast},$$ thus
$(a^{\ast}a)^{n}a^{\ast}((a^{\ast}a)^{n}a^{\ast})^{-}(a^{\ast}a)^{n}=(a^{\ast}a)^{n}$ 
and $(aa^{\ast})^{n}((a^{\ast}a)^{n}a^{\ast})^{-}a^{\ast}(aa^{\ast})^{n}=(aa^{\ast})^{n}$ 
by $a$ is $\ast$-cancellable,
that is $(a^{\ast}a)^{n}$ and $(aa^{\ast})^{n}$ are regular, therefore the condition (7) is satisfied.

$(1)\Leftrightarrow(9)$ It is similar to $(1)\Leftrightarrow(8)$.

$(1)\Rightarrow(10)$-$(13)$ It is easy to see that by Lemma \ref{mp-star-ep}.

The equivalence between $(10)$-$(13)$ can be seen by Lemma \ref{eppp}.

$(12)\Rightarrow(9)$ Suppose $a$ is $\ast$-cancellable and $(aa^{\ast})^{n}\in R^\#$, then
\begin{eqnarray}\label{group-core-dual-drazin1}
(aa^{\ast})^{n}=(aa^{\ast})^{n}[(aa^{\ast})^{n}]^\#(aa^{\ast})^{n}.
\end{eqnarray}
Pre-multiplication of (\ref{group-core-dual-drazin1}) by $a^{\ast}$ now yields
\begin{equation*}\label{group-core-dual-drazin2}
\begin{split}
&a^{\ast}(aa^{\ast})^{n}=a^{\ast}(aa^{\ast})^{n}[(aa^{\ast})^{n}]^\#(aa^{\ast})^{n}\\
&=a^{\ast}(aa^{\ast})^{n}[(aa^{\ast})^{n}]^\#[(aa^{\ast})^{n}]^\#(aa^{\ast})^{n}(aa^{\ast})^{n}\\
&=a^{\ast}(aa^{\ast})^{n}[(aa^{\ast})^{n}]^\#[(aa^{\ast})^{n}]^\#(aa^{\ast})^{n-1}a[a^{\ast}(aa^{\ast})^{n}].
\end{split}
\end{equation*}
Thus $a^{\ast}(aa^{\ast})^{n}$ is regular.
\end{proof}

\begin{definition} \cite{PRA}\label{multiple}
Let $a,b\in R$, we say that $a$ is a multiple of $b$ if $a\in Rb\cap bR$.
\end{definition}

\begin{definition} \label{leftmultiple}
Let $a,b\in R$, we say that $a$ is a left $($right$)$ multiple of $b$ if $a\in Rb$ $($$a\in bR$$)$.
\end{definition}

The existence of the Moore-Penrose inverse of an element in a ring is priori related to a Hermite element. If we take $n=1$,
the condition $(2)$ in the following theorem can be found in \cite[Theorem 1]{PRA} in the category case. 

\begin{theorem}\label{mp-in-ring-semigroup2}
Let $a\in R$ and $n\in \mathbf{N^+}$. Then the following conditions are equivalent:\\
$(1)$ $a\in R^{\dagger}$;\\
$(2)$ There exists a projection $p\in R$ such that $pa=a$ and $p$ is a multiple of $(aa^{\ast})^{n}$;\\
$(3)$ There exists a Hermite element $q\in R$ such that $qa=a$ and $q$ is a left multiple of $(aa^{\ast})^{n}$;\\
$(4)$ There exists a Hermite element $r\in R$ such that $ra=a$ and $r$ is a right multiple of $(aa^{\ast})^{n}$;\\
$(5)$ There exists $b\in R$ such that $ba=a$ and $b$ is a left multiple of $(aa^{\ast})^{n}$.
\end{theorem}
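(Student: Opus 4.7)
The plan is to establish the cycle $(1)\Rightarrow(2)\Rightarrow(3)\Rightarrow(5)\Rightarrow(1)$, then close off $(4)$ by verifying $(2)\Rightarrow(4)\Rightarrow(5)$. The whole argument rides on two facts already in hand: $(aa^{\ast})^{n}\in R^{EP}$ (Lemma \ref{mp-star-ep}), and the absorption identity $(aa^{\ast})^{n}[(aa^{\ast})^{n}]^{\dagger}a=a$ (Lemma \ref{n-regular}).

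For $(1)\Rightarrow(2)$, I would set $p=(aa^{\ast})^{n}[(aa^{\ast})^{n}]^{\dagger}$. This $p$ is a projection because $(aa^{\ast})^{n}$ is Moore-Penrose invertible, and $pa=a$ is exactly Lemma \ref{n-regular}. Since $(aa^{\ast})^{n}$ is EP, $[(aa^{\ast})^{n}]^{\dagger}$ commutes with $(aa^{\ast})^{n}$, so $p=(aa^{\ast})^{n}[(aa^{\ast})^{n}]^{\dagger}=[(aa^{\ast})^{n}]^{\dagger}(aa^{\ast})^{n}\in R(aa^{\ast})^{n}\cap(aa^{\ast})^{n}R$, i.e., $p$ is a multiple of $(aa^{\ast})^{n}$.

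The implication $(2)\Rightarrow(3)$ is immediate since a projection is Hermite and a two-sided multiple is certainly a left multiple; similarly $(3)\Rightarrow(5)$ is trivial (a Hermite element is in particular an element of $R$). For $(5)\Rightarrow(1)$, write $b=s(aa^{\ast})^{n}$ for some $s\in R$. Then $a=ba=s(aa^{\ast})^{n}a\in R(aa^{\ast})^{n}a$, which is condition $(4)$ of Theorem \ref{mp-in-ring-semigroup1}, hence $a\in R^{\dagger}$.

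Finally for the stray clause $(4)$, the implication $(2)\Rightarrow(4)$ is immediate (projection is Hermite, multiple is a right multiple). For $(4)\Rightarrow(5)$ the small trick is to exploit the Hermite hypothesis: if $r=r^{\ast}$ and $r=(aa^{\ast})^{n}t$ for some $t\in R$, then $r=r^{\ast}=t^{\ast}(aa^{\ast})^{n}\in R(aa^{\ast})^{n}$, so $r$ itself serves as $b$ in $(5)$. This is the one place the Hermite hypothesis genuinely does work, and it is also the cleanest way to see that conditions $(3)$ and $(4)$ are in fact the same statement. I do not anticipate any real obstacle; the argument is essentially bookkeeping once Lemmas \ref{mp-star-ep} and \ref{n-regular} are invoked.
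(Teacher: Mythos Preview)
Your proof is correct and follows essentially the same approach as the paper: the key step $(5)\Rightarrow(1)$ via Theorem \ref{mp-in-ring-semigroup1}(4) and the Hermite trick in $(4)\Rightarrow(5)$ are identical to the paper's argument. The only cosmetic difference is in $(1)\Rightarrow(2)$, where the paper takes $p=aa^{\dagger}$ and computes explicitly that $p=[(a^{\dagger})^{\ast}a^{\dagger}]^{n}(aa^{\ast})^{n}$, while you take $p=(aa^{\ast})^{n}[(aa^{\ast})^{n}]^{\dagger}$ and invoke the EP property from Lemma \ref{mp-star-ep}; these are in fact the same projection, and your route is arguably the cleaner use of the prepared lemmas.
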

\begin{proof}
$(1)\Rightarrow(2)$ Suppose $a\in R^{\dagger}$ and let $p=aa^{\dagger}$, then $p^{2}=p=p^{\ast}$ and $pa=a$.
By Lemma \ref{normal-ep}, we have
$aa^{\ast}(a^{\dagger})^{\ast}a^{\dagger}aa^{\ast}=aa^{\ast}$,
$aa^{\ast}(a^{\dagger})^{\ast}a^{\dagger}=(a^{\dagger})^{\ast}a^{\dagger}aa^{\ast}$ and
$p=aa^{\dagger}
=(aa^{\dagger})^{\ast}=(a^{\dagger})^{\ast}a^{\ast}
=(a^{\dagger})^{\ast}(aa^{\dagger}a)^{\ast}
=(a^{\dagger})^{\ast}a^{\dagger}aa^{\ast}
=(a^{\dagger})^{\ast}a^{\dagger}aa^{\ast}(a^{\dagger})^{\ast}a^{\dagger}aa^{\ast}
=[(a^{\dagger})^{\ast}a^{\dagger}]^{2}(aa^{\ast})^{2}
=\cdots
=[(a^{\dagger})^{\ast}a^{\dagger}]^{n}(aa^{\ast})^{n}.$
By $p=p^{\ast}$, we have
$
p=p^{\ast}=[[(a^{\dagger})^{\ast}a^{\dagger}]^{n}(aa^{\ast})^{n}]^{\ast}
=(aa^{\ast})^{n}[[(a^{\dagger})^{\ast}a^{\dagger}]^{n}]^{\ast}.
$
Thus $p$ is a multiple of $(aa^{\ast})^{n}$.

$(2)\Rightarrow(3)$ It is obvious.

$(3)\Rightarrow(4)$ Let $r=q^{\ast}$.

$(4)\Rightarrow(5)$ Suppose $r^{\ast}=r$, $ra=a$ and $r$ is a right multiple of $(aa^{\ast})^{n}$, then
$
r=(aa^{\ast})^{n}w~\text{for}~\text{some}~w\in R.
$
Let $b=r$, then $ba=a$ and by $r^{\ast}=r$, we have
$
b=r=r^{\ast}=((aa^{\ast})^{n}w)^{\ast}=w^{\ast}(aa^{\ast})^{n}.
$
That is $b$ is a left multiple of $(aa^{\ast})^{n}$.

$(5)\Rightarrow(1)$ Since $b$ is a left multiple of $(aa^{\ast})^{n}$, then $b\in R(aa^{\ast})^{n}$,
post-multiplication of $b\in R(aa^{\ast})^{n}$ by $a$ now yields
$ba\in R(aa^{\ast})^{n}a$. Then by $ba=a$, which gives $a\in R(aa^{\ast})^{n}a$,
thus the condition $(4)$ in Theorem \ref{mp-in-ring-semigroup1} is satisfied.
\end{proof}

Similarly, we have the following theorem.

\begin{theorem}\label{mp-in-ring-semigroup3}
Let $a\in R$ and $n\in \mathbf{N^+}$. Then the following conditions are equivalent:\\
$(1)$ $a\in R^{\dagger}$;\\
$(2)$ There exist a projection $w\in R$ such that $aw=a$ and $w$ is a multiple of $(a^{\ast}a)^{n}$;\\
$(3)$ There exist a Hermite element $u\in R$ such that $au=a$ and $u$ is a right multiple of $(a^{\ast}a)^{n}$;\\
$(4)$ There exist a Hermite element $v\in R$ such that $av=a$ and $v$ is a left multiple of $(a^{\ast}a)^{n}$;\\
$(5)$ There exist $c\in R$ such that $ac=a$ and $c$ is a right multiple of $(a^{\ast}a)^{n}$.
\end{theorem}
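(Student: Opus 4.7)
The structure of the statement makes it clear that this theorem is the ``right-handed'' mirror of Theorem \ref{mp-in-ring-semigroup2}, obtained by interchanging $aa^{\ast}$ with $a^{\ast}a$ and swapping the roles of left and right multiples. Accordingly, my plan is to follow the proof of Theorem \ref{mp-in-ring-semigroup2} step by step, replacing the projection $p=aa^{\dagger}$ with $w=a^{\dagger}a$ throughout, and to close the cycle $(1)\Rightarrow(2)\Rightarrow(3)\Rightarrow(4)\Rightarrow(5)\Rightarrow(1)$.

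For $(1)\Rightarrow(2)$, I set $w=a^{\dagger}a$, which is automatically a projection with $aw=a$. The first step is to rewrite $w$ as $w=(a^{\dagger}a)^{\ast}=a^{\ast}(a^{\dagger})^{\ast}=(aa^{\dagger}a)^{\ast}(a^{\dagger})^{\ast}=a^{\ast}a\cdot a^{\dagger}(a^{\dagger})^{\ast}$. By Lemma \ref{normal-ep} and Lemma \ref{mp-star-ep}, $a^{\ast}a\in R^{EP}$ with $(a^{\ast}a)^{\dagger}=a^{\dagger}(a^{\dagger})^{\ast}$, so the factors $a^{\ast}a$ and $a^{\dagger}(a^{\dagger})^{\ast}$ commute. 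Using the idempotence $w=w^{n}$ and this commutativity, a straightforward induction yields $w=(a^{\ast}a)^{n}[a^{\dagger}(a^{\dagger})^{\ast}]^{n}$, exhibiting $w$ as a right multiple of $(a^{\ast}a)^{n}$. Taking adjoints via $w=w^{\ast}$ gives the expression $w=[[a^{\dagger}(a^{\dagger})^{\ast}]^{n}]^{\ast}(a^{\ast}a)^{n}$, so $w$ is also a left multiple. Hence $w$ is a multiple of $(a^{\ast}a)^{n}$ in the sense of Definition \ref{multiple}.

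The chain $(2)\Rightarrow(3)\Rightarrow(4)\Rightarrow(5)$ is routine. A projection is in particular Hermite and a (two-sided) multiple is a right multiple, so $(2)\Rightarrow(3)$ is trivial. For $(3)\Rightarrow(4)$ and $(4)\Rightarrow(5)$, the device is that for a Hermite element $h=(a^{\ast}a)^{n}y$ one has $h=h^{\ast}=y^{\ast}(a^{\ast}a)^{n}$, which converts a right multiple presentation into a left multiple presentation and vice versa; this is exactly the same maneuver used in the proof of Theorem \ref{mp-in-ring-semigroup2}. Finally, for $(5)\Rightarrow(1)$, I simply exploit $ac=a$: writing $c=(a^{\ast}a)^{n}x$ for some $x\in R$, I obtain $a=ac=a(a^{\ast}a)^{n}x\in a(a^{\ast}a)^{n}R$, so condition $(3)$ of Theorem \ref{mp-in-ring-semigroup1} delivers $a\in R^{\dagger}$.

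The only step with nontrivial content is the inductive identity $w=(a^{\ast}a)^{n}[a^{\dagger}(a^{\dagger})^{\ast}]^{n}$ in $(1)\Rightarrow(2)$; it rests squarely on the EP-commuting relation for $a^{\ast}a$ supplied by Lemma \ref{normal-ep}, so once that identity is granted the rest of the argument is bookkeeping with the involution and a single appeal to Theorem \ref{mp-in-ring-semigroup1}.
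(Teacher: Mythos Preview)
Your proposal is correct and follows exactly the route the paper intends: Theorem \ref{mp-in-ring-semigroup3} is stated with the one-line proof ``Similarly, we have the following theorem,'' and your argument is precisely the right-handed mirror of the proof of Theorem \ref{mp-in-ring-semigroup2}, with $w=a^{\dagger}a$ in place of $p=aa^{\dagger}$, the EP commutativity of $a^{\ast}a$ with $(a^{\ast}a)^{\dagger}=a^{\dagger}(a^{\dagger})^{\ast}$ from Lemma \ref{normal-ep} driving the telescoping identity, and the closing appeal to condition $(3)$ of Theorem \ref{mp-in-ring-semigroup1}. Your use of $w=w^{n}$ to obtain $w=(a^{\ast}a)^{n}[a^{\dagger}(a^{\dagger})^{\ast}]^{n}$ in one stroke is a slight streamlining of the paper's iterative substitution, but the content is identical.
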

If we take $n=1$, the condition $(2)$ in the following theorem can be found in \cite[Theorem 1]{PRA} in the category case.
\begin{theorem}\label{mp-in-ring-semigroup4}
Let $a\in R$ and $n\in \mathbf{N^+}$. Then the following conditions are equivalent:\\
$(1)$ $a\in R^{\dagger}$;\\
$(2)$ There exists a projection $q\in R$ such that $qa=0$ and $(aa^{\ast})^{n}+q$ is invertible;\\
$(3)$ There exists a projection $q\in R$ such that $qa=0$ and $(aa^{\ast})^{n}+q$ is left invertible;\\
$(4)$ There exists an idempotent $f\in R$ such that $fa=0$ and $(aa^{\ast})^{n}+f$ is invertible;\\
$(5)$ There exists an idempotent $f\in R$ such that $fa=0$ and $(aa^{\ast})^{n}+f$ is left invertible;\\
$(6)$ There exists $c\in R$ such that $ca=0$ and $(aa^{\ast})^{n}+c$ is invertible;\\
$(7)$ There exists $c\in R$ such that $ca=0$ and $(aa^{\ast})^{n}+c$ is left invertible.\\
In this case, \\
$a^{\dagger}=a^{\ast}y_{i}(aa^{\ast})^{2n-1}y_{i}^{\ast}$, $i\in \{1,2,3\}$,
where $1=y_{1}((aa^{\ast})^{n}+q)=y_{2}((aa^{\ast})^{n}+f)=y_{3}((aa^{\ast})^{n}+c).$
\end{theorem}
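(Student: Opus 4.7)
My plan is to close the loop of equivalences by establishing only $(1)\Rightarrow(2)$ and $(7)\Rightarrow(1)$; all remaining implications among $(2)$--$(7)$ are immediate from the hierarchy ``projection $\Rightarrow$ idempotent $\Rightarrow$ arbitrary element'' combined with ``invertible $\Rightarrow$ left invertible''. Concretely, this gives $(2)\Rightarrow(4)\Rightarrow(6)\Rightarrow(7)$, $(2)\Rightarrow(3)\Rightarrow(5)\Rightarrow(7)$, and $(4)\Rightarrow(5)$, so a single chain closes the cycle.

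For $(1)\Rightarrow(2)$, the natural candidate is $q=1-aa^{\dagger}$, which is a projection and annihilates $a$ on the left. I would then claim that $(aa^{\ast})^{n}+q$ is invertible with two-sided inverse $((aa^{\ast})^{n})^{\dagger}+q$. The verification reduces to checking the four cross-terms: $(aa^{\ast})^{n}q=q(aa^{\ast})^{n}=0$ and $((aa^{\ast})^{n})^{\dagger}q=q((aa^{\ast})^{n})^{\dagger}=0$, together with the identity $(aa^{\ast})^{n}((aa^{\ast})^{n})^{\dagger}=aa^{\dagger}$. The first pair follows from $a^{\ast}aa^{\dagger}=a^{\ast}$, obtained by applying $*$ to $aa^{\dagger}a=a$ and using that $aa^{\dagger}$ is Hermitian. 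The second pair follows from the explicit description $((aa^{\ast})^{n})^{\dagger}=((a^{\dagger})^{\ast}a^{\dagger})^{n}$ afforded by Lemmas \ref{normal-ep} and \ref{mp-star-ep}, together with the identity $aa^{\dagger}(a^{\dagger})^{\ast}=(a^{\dagger}aa^{\dagger})^{\ast}=(a^{\dagger})^{\ast}$, which gives $q(a^{\dagger})^{\ast}=0$. The remaining identity $(aa^{\ast})^{n}((aa^{\ast})^{n})^{\dagger}=aa^{\dagger}$ follows from the EP property of $(aa^{\ast})^{n}$ (Lemma \ref{mp-star-ep}) and the basic calculation $aa^{\ast}(a^{\dagger})^{\ast}a^{\dagger}=a(a^{\dagger}a)a^{\dagger}=aa^{\dagger}$. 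Together these collapse the product to $aa^{\dagger}+q=1$, and symmetrically on the other side.

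For $(7)\Rightarrow(1)$, suppose $c\in R$ satisfies $ca=0$ and $y\in R$ satisfies $y\bigl((aa^{\ast})^{n}+c\bigr)=1$. Post-multiplying by $a$ annihilates the $c$-term and leaves $y(aa^{\ast})^{n}a=a$, so $a\in R(aa^{\ast})^{n}a$. This is exactly condition $(4)$ of Theorem \ref{mp-in-ring-semigroup1}, and therefore $a\in R^{\dagger}$. For the formula, each of $y_{1},y_{2},y_{3}$ coming from $(3)$, $(5)$, $(7)$ respectively satisfies $y_{i}(aa^{\ast})^{n}a=a$ by the same argument, so inserting $y_{i}$ in place of $y_{2}$ in the formula in the proof of $(4)\Rightarrow(1)$ of Theorem \ref{mp-in-ring-semigroup1} yields $a^{\dagger}=a^{\ast}y_{i}(aa^{\ast})^{2n-1}y_{i}^{\ast}$.

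The main obstacle is the verification in $(1)\Rightarrow(2)$ that the candidate inverse works: the cross-term vanishings are short but not automatic, and the cleanest route is via the adjoint identities $a^{\ast}=a^{\ast}aa^{\dagger}$ and $aa^{\dagger}(a^{\dagger})^{\ast}=(a^{\dagger})^{\ast}$ applied repeatedly, which bypass any need to manipulate the EP structure of $(aa^{\ast})^{n}$ directly inside the product.
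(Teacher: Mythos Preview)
Your proposal is correct and follows essentially the same approach as the paper. The paper also takes $q=1-aa^{\dagger}$ for $(1)\Rightarrow(2)$ and exhibits the inverse $((a^{\dagger})^{\ast}a^{\dagger})^{n}+1-aa^{\dagger}$, which is exactly your $((aa^{\ast})^{n})^{\dagger}+q$; and for the converse directions it post-multiplies by $a$ to land in $R(aa^{\ast})^{n}a$ and invokes condition~(4) of Theorem~\ref{mp-in-ring-semigroup1}, just as you do. The only difference is organizational: the paper proves $(3)\Rightarrow(1)$, $(5)\Rightarrow(1)$, and $(7)\Rightarrow(1)$ separately (each by the same argument), whereas you close a single cycle via $(7)\Rightarrow(1)$ and let the trivial implications handle the rest.
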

\begin{proof}
$(1)\Rightarrow(2)$ Suppose $a\in R^{\dagger}$ and let $q=1-aa^{\dagger}$, then $q^{2}=q=q^{\ast}$
and $qa=0$. By Lemma \ref{normal-ep}, we have
$
aa^{\ast}(a^{\dagger})^{\ast}a^{\dagger}=(a^{\dagger})^{\ast}a^{\dagger}aa^{\ast}.
$
Thus,
$((aa^{\ast})^{n}+q)[((a^{\dagger})^{\ast}a^{\dagger})^{n}+1-aa^{\dagger}]=1$
and $[((a^{\dagger})^{\ast}a^{\dagger})^{n}+1-aa^{\dagger}]((aa^{\ast})^{n}+q)=1.$
Therefore, $(aa^{\ast})^{n}+p$ is invertible.

$(2)\Rightarrow(3)$ It is clear.

$(3)\Rightarrow(1)$ Suppose $q^{2}=q=q^{\ast}$, $pa=0$ and $(aa^{\ast})^{n}+q$ is left invertible, then
$1=y_{1}((aa^{\ast})^{n}+q)$ for some $y_{1}\in R$. By $pa=0$, we have
$a=y_{1}((aa^{\ast})^{n}+q)a=y_{1}(aa^{\ast})^{n}a\in R(aa^{\ast})^{n}a.$
That is the condition $(4)$  in Theorem \ref{mp-in-ring-semigroup1} is satisfied and
$
a^{\dagger}=a^{\ast}y_{1}(aa^{\ast})^{n-1}a[y_{1}(aa^{\ast})^{n-1}a]^{\ast}
=a^{\ast}y_{1}(aa^{\ast})^{n-1}aa^{\ast}(aa^{\ast})^{n-1}y_{1}^{\ast}
=a^{\ast}y_{1}(aa^{\ast})^{2n-1}y_{1}^{\ast}.
$

$(1)\Rightarrow(4)$ Let $f=q=1-aa^{\dagger}$, then by $(1)\Rightarrow(2)$, which gives
$f^{2}=f\in R$, $fa=0$ and $(aa^{\ast})^{n}+f$ is invertible.

$(4)\Rightarrow(5)$ It is clear.

$(5)\Rightarrow(1)$ Suppose $f^{2}=f\in R$, $fa=0$ and $(aa^{\ast})^{n}+f$ is left invertible, then
$1=y_{2}((aa^{\ast})^{n}+f)$ for some $y_{2}\in R$. By $fa=0$, we have
$a=y_{2}((aa^{\ast})^{n}+f)a=y_{2}(aa^{\ast})^{n}a\in R(aa^{\ast})^{n}a.$
That is the condition $(4)$  in Theorem \ref{mp-in-ring-semigroup1} is satisfied and
$
a^{\dagger}=a^{\ast}y_{2}(aa^{\ast})^{n-1}a[y_{2}(aa^{\ast})^{n-1}a]^{\ast}
=a^{\ast}y_{2}(aa^{\ast})^{n-1}aa^{\ast}(aa^{\ast})^{n-1}y_{2}^{\ast}
=a^{\ast}y_{2}(aa^{\ast})^{2n-1}y_{2}^{\ast}.
$

$(1)\Rightarrow(6)$ Let $c=q=1-aa^{\dagger}$, then by $(1)\Rightarrow(2)$, which gives
$ca=0$ and $(aa^{\ast})^{n}+c$ is invertible. Since $c=q$ and $q^{2}=q=q^{\ast}$, thus
$(aa^{\ast})^{n}+q$ is invertible implies $(aa^{\ast})^{n}+c$ is invertible.

$(6)\Rightarrow(7)$ It is clear.

$(7)\Rightarrow(1)$ Suppose $ca=0$ and $(aa^{\ast})^{n}+c$ is left invertible, then
$1=y_{3}((aa^{\ast})^{n}+c)$ for some $y_{3}\in R$. By $ca=0$, we have
$a=y_{3}((aa^{\ast})^{n}+c)a=y_{3}(aa^{\ast})^{n}a\in R(aa^{\ast})^{n}a.$
That is the condition $(4)$ in Theorem \ref{mp-in-ring-semigroup1} is satisfied and
\begin{equation*}
a^{\dagger}=a^{\ast}y_{3}(aa^{\ast})^{n-1}a[y_{3}(aa^{\ast})^{n-1}a]^{\ast}
=a^{\ast}y_{3}(aa^{\ast})^{n-1}aa^{\ast}(aa^{\ast})^{n-1}y_{3}^{\ast}
=a^{\ast}y_{3}(aa^{\ast})^{2n-1}y_{3}^{\ast}.
\end{equation*}
\end{proof}
Similarly, we have the following theorem.
\begin{theorem}\label{mp-in-ring-semigroup5}
Let $a\in R$ and $n\in \mathbf{N^+}$. Then the following conditions are equivalent:\\
$(1)$ $a\in R^{\dagger}$;\\
$(2)$ There exists a projection $p\in R$ such that $ap=0$ and $(a^{\ast}a)^{n}+p$ is invertible;\\
$(3)$ There exists a projection $p\in R$ such that $ap=0$ and $(a^{\ast}a)^{n}+p$ is right invertible;\\
$(4)$ There exists an idempotent $e\in R$ such that $ae=0$ and $(a^{\ast}a)^{n}+e$ is invertible;\\
$(5)$ There exists an idempotent $e\in R$ such that $ae=0$ and $(a^{\ast}a)^{n}+e$ is right invertible;\\
$(6)$ There exists $b\in R$ such that $ab=0$ and $(a^{\ast}a)^{n}+b$ is invertible;\\
$(7)$ There exists $b\in R$ such that $ab=0$ and $(a^{\ast}a)^{n}+b$ is right invertible.\\
In this case, \\
$a^{\dagger}=x_{i}^{\ast}(a^{\ast}a)^{2n-1}x_{i}a^{\ast}$, $i\in \{1,2,3\}$,
where $1=((aa^{\ast})^{n}+p)x_{1}=((aa^{\ast})^{n}+e)x_{2}=((aa^{\ast})^{n}+b)x_{3}$.
\end{theorem}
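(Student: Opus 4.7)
The proof is the right-sided mirror of Theorem \ref{mp-in-ring-semigroup4}. Throughout, one replaces $(aa^{\ast})^{n}$ by $(a^{\ast}a)^{n}$, swaps the annihilator condition $pa=0$ with $ap=0$, swaps left invertibility with right invertibility, and at the final step invokes condition $(3)$ of Theorem \ref{mp-in-ring-semigroup1} (namely $a\in a(a^{\ast}a)^{n}R$) in place of condition $(4)$. This organizational move is the key to avoiding a redundant argument.

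For $(1)\Rightarrow(2)$ I would set $p=1-a^{\dagger}a$, which is a projection with $ap=0$. Using that $a^{\ast}a$ is normal (hence EP with $(a^{\ast}a)^{\dagger}=a^{\dagger}(a^{\dagger})^{\ast}$, by Lemma \ref{normal-ep}), together with the identity $(a^{\ast}a)(a^{\ast}a)^{\dagger}=a^{\dagger}a$ (a direct consequence of $(aa^{\dagger})^{\ast}=aa^{\dagger}$ and $aa^{\dagger}a=a$), one verifies $(a^{\ast}a)p=p(a^{\ast}a)=0$ and $(a^{\ast}a)^{\dagger}p=p(a^{\ast}a)^{\dagger}=0$, from which a short computation shows that $((a^{\ast}a)^{\dagger})^{n}+p$ is a genuine two-sided inverse of $(a^{\ast}a)^{n}+p$. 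Taking $e=b=1-a^{\dagger}a$ handles $(1)\Rightarrow(4)$ and $(1)\Rightarrow(6)$ simultaneously, and $(2)\Rightarrow(3)$, $(4)\Rightarrow(5)$, $(6)\Rightarrow(7)$ are trivial since two-sided invertibility implies right invertibility.

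For the three reverse implications $(3)\Rightarrow(1)$, $(5)\Rightarrow(1)$, $(7)\Rightarrow(1)$, the argument is uniform. Assume $((a^{\ast}a)^{n}+t)x_{i}=1$ with $t\in\{p,e,b\}$ satisfying $at=0$. Pre-multiplying by $a$ and using $at=0$ yields $a=a(a^{\ast}a)^{n}x_{i}\in a(a^{\ast}a)^{n}R$, so condition $(3)$ of Theorem \ref{mp-in-ring-semigroup1} applies and delivers both $a\in R^{\dagger}$ and the closed form $a^{\dagger}=x_{i}^{\ast}(a^{\ast}a)^{2n-1}x_{i}a^{\ast}$ recorded there. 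The only step that requires genuine care is the two-sided inversion in $(1)\Rightarrow(2)$, and even that reduces to the commutation identities above; all other implications are bookkeeping.
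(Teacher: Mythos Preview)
Your proposal is correct and matches the paper's own treatment exactly: the paper proves Theorem \ref{mp-in-ring-semigroup5} by the single sentence ``Similarly, we have the following theorem,'' meaning the reader is to mirror the argument of Theorem \ref{mp-in-ring-semigroup4} with $(a^{\ast}a)^{n}$ in place of $(aa^{\ast})^{n}$, $p=1-a^{\dagger}a$ in place of $q=1-aa^{\dagger}$, right invertibility in place of left, and condition $(3)$ of Theorem \ref{mp-in-ring-semigroup1} in place of condition $(4)$. Your explicit verification that $((a^{\ast}a)^{\dagger})^{n}+p$ inverts $(a^{\ast}a)^{n}+p$ and your uniform handling of $(3),(5),(7)\Rightarrow(1)$ via $a=a(a^{\ast}a)^{n}x_{i}$ are precisely the mirrored steps the paper leaves implicit.
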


If we take $n=1$ in the equivalent condition $(2)$ in Theorem \ref{mp-in-ring-semigroup5}, we can get the condition
$a$ is left $\ast$-cancellable in \cite[Theorem 1]{KDC} can be dropped.
An element $a\in R$ is called co-supported if there exists a projection $q\in R$ such that $qa=a$ and $aa^{\ast}+1-q$ is invertible.
In \cite{KDC}, Koliha, Djordjevi\'{c} and Cvetkvi\'{c}  also proved that $a\in R^{\dagger}$ if and only if $a$ is right $\ast$-cancellable and co-supported.
If we take $n=1$ in the equivalent condition $(2)$ in Theorem \ref{mp-in-ring-semigroup4}, we can get the condition
$a$ is right $\ast$-cancellable can be dropped. Thus we have the following corollary.
\begin{corollary} \label{cora}
Let $a\in R$. Then the following conditions are equivalent:\\
$(1)$ $a\in R^{\dagger}$;\\
$(2)$ $a$ is well-supported;\\
$(3)$ $a$ is co-supported.
\end{corollary}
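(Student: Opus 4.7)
The plan is to deduce the corollary as a direct specialization of Theorems \ref{mp-in-ring-semigroup4} and \ref{mp-in-ring-semigroup5} at $n=1$, using only the elementary fact that the complement $1-p$ of a projection $p$ is again a projection, together with the identity $a - ap = a(1-p)$.

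For the equivalence $(1) \Leftrightarrow (2)$, I would start from the definition of well-supportedness: there is a projection $p$ with $ap = a$ and $a^{\ast}a + 1 - p \in R^{-1}$. Setting $p' = 1 - p$, one checks immediately that $p'$ is a projection, that $ap' = a - ap = 0$, and that $a^{\ast}a + p'$ coincides with $a^{\ast}a + 1 - p$ and is therefore invertible. This is exactly condition $(2)$ of Theorem \ref{mp-in-ring-semigroup5} with $n=1$, which is equivalent to $a \in R^{\dagger}$. The converse reverses the complementation: a projection $p'$ supplied by Theorem \ref{mp-in-ring-semigroup5}$(2)$ yields a witness $p = 1 - p'$ of well-supportedness.

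For $(1) \Leftrightarrow (3)$, I would run the same argument on the opposite side. If $q$ witnesses co-supportedness, i.e. $qa = a$ and $aa^{\ast} + 1 - q \in R^{-1}$, then $q' = 1 - q$ is a projection with $q'a = 0$ and $aa^{\ast} + q'$ invertible, which matches condition $(2)$ of Theorem \ref{mp-in-ring-semigroup4} at $n = 1$; hence $a \in R^{\dagger}$. The converse again uses $q = 1 - q'$.

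There is no substantive obstacle here, since the heavy lifting was already carried out in Theorems \ref{mp-in-ring-semigroup4} and \ref{mp-in-ring-semigroup5}. The corollary is essentially a translation of their $n=1$ cases through the involution-preserving bijection $p \leftrightarrow 1-p$ on projections. The only sanity check required is the algebraic identification of the invertible element before and after complementation, which is immediate.
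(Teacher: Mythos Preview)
Your proposal is correct and matches the paper's own approach: the corollary is stated as an immediate consequence of taking $n=1$ in condition $(2)$ of Theorems \ref{mp-in-ring-semigroup4} and \ref{mp-in-ring-semigroup5}, with the complementation $p\mapsto 1-p$ translating between the well-supported/co-supported formulations and those theorems. The paper leaves this translation implicit, while you spell it out, but the argument is the same.
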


\begin{lemma} \cite{J}\label{jacobson-lemma}
Let $a,b\in R$. Then we have:\\
$(1)$ $1-ab$ is left invertible if and only if $1-ba$ is left invertible;\\
$(2)$ $1-ab$ is right invertible if and only if $1-ba$ is right invertible;\\
$(3)$ $1-ab$ is invertible if and only if $1-ba$ is invertible.
\end{lemma}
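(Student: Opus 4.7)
The plan is to use the classical ``Hopf/Jacobson trick'': produce an explicit one-sided inverse of $1-ba$ out of any one-sided inverse of $1-ab$, using the formal identity $(1-ba)^{-1}=1+b(1-ab)^{-1}a$ as a guide.

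For part (1), assume $1-ab$ is left invertible, so $u(1-ab)=1$ for some $u\in R$, equivalently $uab=u-1$. I would then set $v:=1+bua$ and directly verify $v(1-ba)=1$: expanding gives $v(1-ba)=1-ba+bua-buaba$, and rewriting $buaba=b(uab)a=b(u-1)a=bua-ba$ yields cancellation down to $1$. The reverse implication is obtained by swapping the roles of $a$ and $b$ (i.e. applying the same argument with the pair $(b,a)$ in place of $(a,b)$).

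Part (2) is dealt with by the symmetric calculation: if $(1-ab)u=1$, then $abu=u-1$, and one checks $(1-ba)(1+bua)=1$ by the mirror expansion. Again the converse follows by interchanging $a$ and $b$. Part (3) is then immediate, since two-sided invertibility is equivalent to the conjunction of left and right invertibility, so (1) and (2) together give $1-ab$ invertible iff $1-ba$ invertible.

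There is no genuine obstacle here; the only non-obvious point is guessing the correct one-sided inverse $1+bua$, but once that Ansatz is written down, each verification is a two-line manipulation using the defining relation $uab=u-1$ (or $abu=u-1$). I would present the proof of (1) in full, then note that (2) is ``similar'' via the obvious sided modification, and conclude (3) in one sentence.
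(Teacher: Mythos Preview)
Your argument is correct and is exactly the classical Jacobson trick. Note, however, that the paper does not supply its own proof of this lemma: it is simply quoted from \cite{J} without proof, so there is nothing in the paper to compare against. Your write-up would serve perfectly well as a self-contained proof in its place.
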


In \cite{PP1}, Patri\'{c}io proved that if $a\in R$ is regualr with $a^{-}\in a\{1\}$, then
$a\in R^{\dagger}$ if and only if $a^{\ast}a+1-a^{-}a$ is invertible if and only if $aa^{\ast}+1-aa^{-}$ is invertible.
In the following theorem, we show that $a^{\ast}a$ and $aa^{\ast}$ can be replace by $(a^{\ast}a)^{n}$ and $(aa^{\ast})^{n}$, respectively.
Moreover, the invertibility can be generalized to the one sided invertibility.

\begin{theorem} \label{corb}
Let $a\in R$ and $n\in \mathbf{N^+}$. If $a$ is regular and $a^{-}\in a\{1\}$, then the following conditions are equivalent:\\
$(1)$ $a\in R^{\dagger}$;\\
$(2)$ $v=(a^{\ast}a)^{n}+1-a^{-}a$ is invertible;\\
$(3)$ $v=(a^{\ast}a)^{n}+1-a^{-}a$ is right invertible;\\
$(4)$ $u=(aa^{\ast})^{n}+1-aa^{-}$ is invertible;\\
$(5)$ $u=(aa^{\ast})^{n}+1-aa^{-}$ is left invertible.
\end{theorem}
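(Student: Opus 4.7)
My plan is to establish the cycle $(1) \Rightarrow (2) \Rightarrow (3) \Rightarrow (1)$ and its mirror $(1) \Rightarrow (4) \Rightarrow (5) \Rightarrow (1)$. The two trivial links are $(2) \Rightarrow (3)$ and $(4) \Rightarrow (5)$, since invertibility implies one-sided invertibility. For $(3) \Rightarrow (1)$ I would note that $a a^{-} a = a$ forces $e := 1 - a^{-} a$ to be an idempotent with $a e = 0$, so Theorem \ref{mp-in-ring-semigroup5}(5) applies directly and yields $a \in R^{\dagger}$; symmetrically, for $(5) \Rightarrow (1)$ the element $f := 1 - a a^{-}$ is an idempotent with $f a = 0$, and Theorem \ref{mp-in-ring-semigroup4}(5) finishes the job.

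The heart of the argument is the implication $(1) \Rightarrow (2)$, whose key idea is to reduce to the projection case already handled by Theorem \ref{mp-in-ring-semigroup5}(2). Assuming $a \in R^{\dagger}$, I would introduce $v_{0} := (a^{\ast} a)^{n} + 1 - a^{\dagger} a$; by that theorem, applied with the canonical projection $p = 1 - a^{\dagger} a$, combined with Lemma \ref{mp-star-ep}, this $v_{0}$ is invertible, and one checks directly that its two-sided inverse is $w_{0} := ((a^{\ast} a)^{n})^{\dagger} + 1 - a^{\dagger} a$. The point is that $v - v_{0} = a^{\dagger} a - a^{-} a$ is nilpotent of index at most $2$: using $a a^{\dagger} a = a$ and $a a^{-} a = a$ one obtains the crossing identities $(a^{\dagger} a)(a^{-} a) = a^{\dagger} a$ and $(a^{-} a)(a^{\dagger} a) = a^{-} a$, and hence $(a^{\dagger} a - a^{-} a)^{2} = 0$. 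A short calculation shows $w_{0}(a^{\dagger} a - a^{-} a) = a^{\dagger} a - a^{-} a$, using $((a^{\ast} a)^{n})^{\dagger} a^{\dagger} a = ((a^{\ast} a)^{n})^{\dagger}$ (from EP-ness of $(a^{\ast} a)^{n}$) together with the crossing identities. Therefore $v = v_{0}\bigl(1 + w_{0}(v - v_{0})\bigr) = v_{0}\bigl(1 + a^{\dagger} a - a^{-} a\bigr)$, and the second factor is inverted by $1 - a^{\dagger} a + a^{-} a$, giving an explicit two-sided inverse of $v$.

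The implication $(1) \Rightarrow (4)$ is the mirror image: set $u_{0} := (a a^{\ast})^{n} + 1 - a a^{\dagger}$, apply Theorem \ref{mp-in-ring-semigroup4}(2), and use the dual crossing identities $(a a^{\dagger})(a a^{-}) = a a^{-}$ and $(a a^{-})(a a^{\dagger}) = a a^{\dagger}$, whence $(a a^{\dagger} - a a^{-})^{2} = 0$, to transfer invertibility from $u_{0}$ to $u$. The only subtle step, and therefore the main obstacle, is verifying cleanly that $w_{0}$ acts as the identity on the nilpotent perturbation $a^{\dagger} a - a^{-} a$; once that bookkeeping is in place, the rest is automatic and no ingredient beyond Lemma \ref{mp-star-ep} together with Theorems \ref{mp-in-ring-semigroup4} and \ref{mp-in-ring-semigroup5} is required.
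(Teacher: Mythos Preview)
Your proof is correct, but the route for the key implication $(1)\Rightarrow(2)$ differs substantially from the paper's. The paper does not pass through the ``canonical'' element $v_{0}=(a^{\ast}a)^{n}+1-a^{\dagger}a$; instead it invokes Lemma~\ref{aa-star-a} to write $a=x^{\ast}aa^{\ast}a=aa^{\ast}ay^{\ast}$, iterates to obtain $a^{\ast}=(a^{\ast}a)^{n}a^{\ast}x^{n}=y^{n}a^{\ast}(aa^{\ast})^{n}$, and then exhibits by direct computation a two-sided inverse of $(a^{\ast}a)^{n}a^{\ast}(a^{-})^{\ast}+1-a^{\ast}(a^{-})^{\ast}$, finally transferring invertibility to $v^{\ast}$ (hence $v$) via Jacobson's lemma (Lemma~\ref{jacobson-lemma}); the same lemma is used again to link $(2)$ and $(4)$ directly. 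Your argument replaces all of this with a nilpotent-perturbation trick: you observe that $v-v_{0}=a^{\dagger}a-a^{-}a$ squares to zero by the crossing identities, and that $w_{0}(v-v_{0})=v-v_{0}$, so $v=v_{0}(1+a^{\dagger}a-a^{-}a)$ is a product of two units. This is more conceptual, avoids Jacobson's lemma altogether, and makes transparent why changing the inner inverse from $a^{\dagger}$ to an arbitrary $a^{-}$ costs nothing; the paper's approach, by contrast, produces explicit inverse formulae in terms of $x$ and $y$ and establishes the bridge $(2)\Leftrightarrow(4)$ without re-running the mirror argument.
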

\begin{proof}
$(2)\Leftrightarrow(4)$
By $u=(aa^{\ast})^{n}+1-aa^{-}=1+a[a^{\ast}(aa^{\ast})^{n-1}-a^{-}]$ is invertible and Lemma \ref{jacobson-lemma}, we have $u$ is invertible is equivalent to
$1+[a^{\ast}(aa^{\ast})^{n-1}-a^{-}]a=(a^{\ast}a)^{n}+1-a^{-}a$ is invertible, which is $v$ is invertible.

$(1)\Rightarrow(2)$ If $a\in R^{\dagger}$, then by Lemma \ref{aa-star-a} we have
\begin{eqnarray}\label{mary-regular-unitw}
a=x^{\ast}aa^{\ast}a~\text{and}~a=aa^{\ast}ay^{\ast}~\text{for}~\text{some}~x,y\in R.
\end{eqnarray}
Taking involution on (\ref{mary-regular-unitw}), we have
$
a^{\ast}=a^{\ast}aa^{\ast}x~\text{and}~a^{\ast}=ya^{\ast}aa^{\ast}.
$
Thus
\begin{eqnarray}\label{mary-regular-unit2}
a^{\ast}=a^{\ast}aa^{\ast}x=a^{\ast}a(a^{\ast}aa^{\ast}x)x=(a^{\ast}a)^{2}a^{\ast}x^{2}=\cdots\cdots=(a^{\ast}a)^{n}a^{\ast}x^{n}.\\
\label{mary-regular-unit3}
a^{\ast}=ya^{\ast}aa^{\ast}=y(ya^{\ast}aa^{\ast})aa^{\ast}=y^{2}a^{\ast}(aa^{\ast})^{2}=\cdots\cdots=y^{n}a^{\ast}(aa^{\ast})^{n}.
\end{eqnarray}
By (\ref{mary-regular-unit2}) and (\ref{mary-regular-unit3}), we have
\begin{equation} \label{mary-regular-unit4}
\begin{split}
&[(a^{\ast}a)^{n}a^{\ast}(a^{-})^{\ast}+1-a^{\ast}(a^{-})^{\ast}](a^{\ast}x^{n}(a^{-})^{\ast}+1-a^{\ast}(a^{-})^{\ast})\\
&=(a^{\ast}a)^{n}a^{\ast}(a^{-})^{\ast}a^{\ast}x^{n}(a^{-})^{\ast}+(a^{\ast}a)^{n}a^{\ast}(a^{-})^{\ast}(1-a^{\ast}(a^{-})^{\ast})\\
&+(1-a^{\ast}(a^{-})^{\ast})a^{\ast}x^{n}(a^{-})^{\ast}+(1-a^{\ast}(a^{-})^{\ast})^{2}\\
&=(a^{\ast}a)^{n}a^{\ast}(a^{-})^{\ast}a^{\ast}x^{n}(a^{-})^{\ast}+(1-a^{\ast}(a^{-})^{\ast})^{2}\\
&=(a^{\ast}a)^{n}a^{\ast}x^{n}(a^{-})^{\ast}+1-a^{\ast}(a^{-})^{\ast}\\
&=a^{\ast}(a^{-})^{\ast}+1-a^{\ast}(a^{-})^{\ast}\\
&=1.
\end{split}
\end{equation}
and
\begin{equation} \label{mary-regular-unit5}
\begin{split}
&(y^{n}a^{\ast}(a^{-})^{\ast}+1-a^{\ast}(a^{-})^{\ast})[(a^{\ast}a)^{n}a^{\ast}(a^{-})^{\ast}+1-a^{\ast}(a^{-})^{\ast}]\\
&=y^{n}a^{\ast}(a^{-})^{\ast}(a^{\ast}a)^{n}a^{\ast}(a^{-})^{\ast}+y^{n}a^{\ast}(a^{-})^{\ast}(1-a^{\ast}(a^{-})^{\ast})\\
&+(1-a^{\ast}(a^{-})^{\ast})(a^{\ast}a)^{n}a^{\ast}(a^{-})^{\ast}+(1-a^{\ast}(a^{-})^{\ast})^{2}\\
&=y^{n}a^{\ast}(a^{-})^{\ast}(a^{\ast}a)^{n}a^{\ast}(a^{-})^{\ast}+(1-a^{\ast}(a^{-})^{\ast})^{2}\\
&=y^{n}a^{\ast}(a^{-})^{\ast}a^{\ast}(aa^{\ast})^{n}(a^{-})^{\ast}+1-a^{\ast}(a^{-})^{\ast}\\
&=a^{\ast}(a^{-})^{\ast}+1-a^{\ast}(a^{-})^{\ast}\\
&=1.
\end{split}
\end{equation}
By (\ref{mary-regular-unit4}) and (\ref{mary-regular-unit5}) we have
$(a^{\ast}a)^{n}a^{\ast}(a^{-})^{\ast}+1-a^{\ast}(a^{-})^{\ast}=1+[(a^{\ast}a)^{n}-1]a^{\ast}(a^{-})^{\ast}$ is invertible.
By Lemma \ref{jacobson-lemma}, we have
$1+a^{\ast}(a^{-})^{\ast}[(a^{\ast}a)^{n}-1]=(a^{\ast}a)^{n}+1-a^{\ast}(a^{-})^{\ast}$ is invertible, which is $v^{\ast}$ is invertible
, thus $v$ is invertible.

$(2)\Rightarrow(3)$ and $(4)\Rightarrow(5)$ It is clear.

$(3)\Rightarrow(1)$ and $(5)\Rightarrow(1)$ It is easy to see that by Theorem \ref{mp-in-ring-semigroup5} and Theorem \ref{mp-in-ring-semigroup4}, respectively.
\end{proof}

In \cite{H}, Hartwig proved that $a\in R^{\{1,3\}}$ if and only if $R=aR\oplus (a^{\ast})^{\circ}$. And, it is also proved that
$a\in R^{\{1,4\}}$ if and only if $R=Ra\oplus ^{\circ}\!(a^{\ast})$.
Hence $a\in R^{\dagger}$ if and only if $R=aR\oplus (a^{\ast})^{\circ}=Ra\oplus ^{\circ}\!(a^{\ast})$.

\begin{theorem}\label{mp-in-ring-semigroup6}
Let $a\in R$ and $n\in \mathbf{N^+}$. Then the following conditions are equivalent:\\
$(1)$ $a\in R^{\dagger}$;\\
$(2)$ $R=a^{\circ}\oplus (a^{\ast}a)^{n}R$;\\
$(3)$ $R=a^{\circ}+ (a^{\ast}a)^{n}R$;\\
$(4)$ $R=(a^{\ast})^{\circ}\oplus (aa^{\ast})^{n}R$;\\
$(5)$ $R=(a^{\ast})^{\circ}+ (aa^{\ast})^{n}R$;\\
$(6)$ $R=^{\circ}\!a\oplus R(aa^{\ast})^{n}$;\\
$(7)$ $R=^{\circ}\!a+ R(aa^{\ast})^{n}$;\\
$(8)$ $R=^{\circ}\!(a^{\ast})\oplus R(a^{\ast}a)^{n}$;\\
$(9)$ $R=^{\circ}\!(a^{\ast})+ R(a^{\ast}a)^{n}$.
\end{theorem}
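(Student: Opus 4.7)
The nine conditions form four symmetric pairs $\{(2),(3)\}$, $\{(4),(5)\}$, $\{(6),(7)\}$, $\{(8),(9)\}$; inside each pair the direct-sum statement trivially implies the sum statement. My plan is to prove $(1)\Rightarrow(2)\Rightarrow(3)\Rightarrow(1)$ directly, then get $(1)\Leftrightarrow(4)\Leftrightarrow(5)$ by replacing $a$ with $a^{\ast}$ and using $a\in R^{\dagger}\Leftrightarrow a^{\ast}\in R^{\dagger}$ (Lemma \ref{star-mp}). The right-handed chain $(1)\Leftrightarrow(6)\Leftrightarrow(7)$ is a mirror of $(1)\Leftrightarrow(2)\Leftrightarrow(3)$, and $(1)\Leftrightarrow(8)\Leftrightarrow(9)$ again follows by passing to $a^{\ast}$.

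\textbf{The easy direction $(3)\Rightarrow(1)$.} From $R=a^{\circ}+(a^{\ast}a)^{n}R$, write $1=u+(a^{\ast}a)^{n}v$ with $au=0$. Left-multiplying by $a$ gives $a=a(a^{\ast}a)^{n}v\in a(a^{\ast}a)^{n}R$, and Theorem \ref{mp-in-ring-semigroup1}(3) immediately yields $a\in R^{\dagger}$. The corresponding step for $(7)\Rightarrow(1)$ is identical after swapping sides and using Theorem \ref{mp-in-ring-semigroup1}(4).

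\textbf{The hard direction $(1)\Rightarrow(2)$.} Assume $a\in R^{\dagger}$. The crucial observation is the identity $((a^{\ast}a)^{n})^{\dagger}(a^{\ast}a)^{n}=a^{\dagger}a$, which follows from the short computation $(a^{\ast}a)^{\dagger}(a^{\ast}a)=a^{\dagger}(a^{\dagger})^{\ast}a^{\ast}a=a^{\dagger}(aa^{\dagger})^{\ast}a=a^{\dagger}(aa^{\dagger})a=a^{\dagger}a$, combined with $((a^{\ast}a)^{n})^{\dagger}=((a^{\ast}a)^{\dagger})^{n}$ (Lemma \ref{normal-ep}(2)) and the EP-commutation of $(a^{\ast}a)^{\dagger}$ with $(a^{\ast}a)$. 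Set $p=a^{\dagger}a$ and decompose any $r\in R$ as $r=(r-pr)+pr$. Since by Lemma \ref{mp-star-ep} we also have $p=(a^{\ast}a)^{n}((a^{\ast}a)^{n})^{\dagger}$, we get $pr\in (a^{\ast}a)^{n}R$; and $a(r-pr)=ar-(aa^{\dagger}a)r=0$ gives $r-pr\in a^{\circ}$. For the trivial intersection, let $x\in a^{\circ}\cap (a^{\ast}a)^{n}R$ and write $x=(a^{\ast}a)^{n}y$; then $px=a^{\dagger}(ax)=0$ while simultaneously $px=((a^{\ast}a)^{n})^{\dagger}(a^{\ast}a)^{2n}y=(a^{\ast}a)^{n}y=x$ (using the EP identity $b^{\dagger}b^{2}=b$ with $b=(a^{\ast}a)^{n}$), so $x=0$.

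\textbf{Remaining chains and main obstacle.} The chain $(1)\Rightarrow(6)\Rightarrow(7)\Rightarrow(1)$ runs through the same template with the mirror projection $q=aa^{\dagger}=(aa^{\ast})^{n}((aa^{\ast})^{n})^{\dagger}$ (the analogous identity $(aa^{\ast})^{\dagger}(aa^{\ast})=\cdots=aa^{\dagger}$ is proved the same way), the decomposition $r=rq+r(1-q)$, and Theorem \ref{mp-in-ring-semigroup1}(4) in place of (3). The two remaining chains $(1)\Leftrightarrow(4)\Leftrightarrow(5)$ and $(1)\Leftrightarrow(8)\Leftrightarrow(9)$ are then immediate by applying the two chains already proved to $a^{\ast}$. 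The only genuine technical step of the whole argument is collapsing the EP projections $((a^{\ast}a)^{n})^{\dagger}(a^{\ast}a)^{n}$ and $(aa^{\ast})^{n}((aa^{\ast})^{n})^{\dagger}$ down to $a^{\dagger}a$ and $aa^{\dagger}$; once this is in place the decomposition $r=pr+(r-pr)$ gives both the spanning and the directness at essentially no additional cost.
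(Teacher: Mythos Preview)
Your proof is correct, and the overall architecture---prove the cycle $(1)\Rightarrow(2)\Rightarrow(3)\Rightarrow(1)$, then obtain $(4)$--$(9)$ by passing to $a^{\ast}$ and by left/right symmetry---is exactly what the paper does. Your $(3)\Rightarrow(1)$ step also matches the paper's verbatim.

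The one genuine difference is in $(1)\Rightarrow(2)$. The paper does not develop the projection identity $((a^{\ast}a)^{n})^{\dagger}(a^{\ast}a)^{n}=a^{\dagger}a$; instead it pulls the forward direction of Theorem~\ref{mp-in-ring-semigroup1}(3), writing $a=a(a^{\ast}a)^{n}b$ for some $b$, and then decomposes $1=(1-(a^{\ast}a)^{n}b)+(a^{\ast}a)^{n}b$. Directness is obtained by a bare-hands substitution: for $u=(a^{\ast}a)^{n}v$ with $au=0$, one replaces the leading $a^{\ast}$ by $(a(a^{\ast}a)^{n}b)^{\ast}$ and unwinds to $u=b^{\ast}(a^{\ast}a)^{n-1}a^{\ast}(au)=0$. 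Your route is more structural: you identify the canonical projection $p=a^{\dagger}a$ with the EP projection of $(a^{\ast}a)^{n}$ and let $r=pr+(r-pr)$ do both jobs at once. What your approach buys is a cleaner, reusable identity and a one-line directness argument via $b^{\dagger}b^{2}=b$; what the paper's approach buys is that it needs nothing beyond Theorem~\ref{mp-in-ring-semigroup1}(3), so it avoids invoking Lemmas~\ref{normal-ep}(2) and~\ref{mp-star-ep} a second time. Either way the content is the same.
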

\begin{proof}
$(1)\Rightarrow(2)$
Suppose $a\in R^{\dagger}$, then by Theorem \ref{mp-in-ring-semigroup1} we have $a\in a(a^{\ast}a)^{n}R$, that is
$
a=a(a^{\ast}a)^{n}b~ \text{for}~\text{some}~b\in R.
$
Thus $a[1-(a^{\ast}a)^{n}b]=0$, which is equivalent to $1-(a^{\ast}a)^{n}b\in a^{\circ}$.

By $1=1-(a^{\ast}a)^{n}b+(a^{\ast}a)^{n}b\in a^{\circ}+(a^{\ast}a)^{n}R$, we have
$
R=a^{\circ}+(a^{\ast}a)^{n}R.
$
Let $u\in a^{\circ}\cap (a^{\ast}a)^{n}R$, then we have
$
au=0~\text{and}~u=(a^{\ast}a)^{n}v,~\text{for}~\text{some}~v\in R.
$
Hence
$
u
=(a^{\ast}a)^{n}v=a^{\ast}a(a^{\ast}a)^{n-1}v
=(a(a^{\ast}a)^{n}b)^{\ast}a(a^{\ast}a)^{n-1}v
=b^{\ast}(a^{\ast}a)^{n}a^{\ast}a(a^{\ast}a)^{n-1}v
=b^{\ast}(a^{\ast}a)^{n}(a^{\ast}a)^{n}v
=b^{\ast}(a^{\ast}a)^{n}u
=b^{\ast}(a^{\ast}a)^{n-1}a^{\ast}(au)
=0.
$
Whence $R=a^{\circ}\oplus (a^{\ast}a)^{n}R$.

$(2)\Rightarrow(3)$ It is clear.

$(3)\Rightarrow(1)$ Suppose $R=a^{\circ}+ (a^{\ast}a)^{n}R$, Pre-multiplication of $R=a^{\circ}+ (a^{\ast}a)^{n}R$ by $a$ now yields
$
aR=aa^{\circ}+ a(a^{\ast}a)^{n}R.
$
By $aa^{\circ}=0$, we have $a\in a(a^{\ast}a)^{n}R$, that is the condition $(3)$ in Theorem \ref{mp-in-ring-semigroup1} is satisfied.

By the equivalence between $(1)$, $(2)$ and $(3)$ and Lemma \ref{star-mp}, which implies
the equivalence between $(1)$, $(4)$ and $(5)$. The equivalence between $(1)$, $(6)$-$(9)$
is similar to the equivalence between $(1)$, $(2)$-$(5)$.
\end{proof}

If we take $n=1$ in the equivalent conditions $(2)$-$(9)$ in Theorem \ref{mp-in-ring-semigroup6}, we have the following corollary.
\begin{corollary} \label{corc}
Let $a\in R$. The following conditions are equivalent:\\
$(1)$ $a\in R^{\dagger}$;\\
$(2)$ $R=a^{\circ}\oplus a^{\ast}aR$;\\
$(3)$ $R=a^{\circ}+ a^{\ast}aR$;\\
$(4)$ $R=(a^{\ast})^{\circ}\oplus aa^{\ast}R$;\\
$(5)$ $R=(a^{\ast})^{\circ}+ aa^{\ast}R$;\\
$(6)$ $R=^{\circ}\!a\oplus Raa^{\ast}$;\\
$(7)$ $R=^{\circ}\!a+ Raa^{\ast}$;\\
$(8)$ $R=^{\circ}\!(a^{\ast})\oplus Ra^{\ast}a$;\\
$(9)$ $R=^{\circ}\!(a^{\ast})+ Ra^{\ast}a$.
\end{corollary}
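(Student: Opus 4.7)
The plan is to observe that Corollary \ref{corc} is an immediate specialization of Theorem \ref{mp-in-ring-semigroup6}. Since Theorem \ref{mp-in-ring-semigroup6} establishes the equivalence of $a\in R^{\dagger}$ with each of the statements $R=a^{\circ}\oplus (a^{\ast}a)^{n}R$, $R=a^{\circ}+(a^{\ast}a)^{n}R$, $R=(a^{\ast})^{\circ}\oplus (aa^{\ast})^{n}R$, $R=(a^{\ast})^{\circ}+(aa^{\ast})^{n}R$, $R={}^{\circ}a\oplus R(aa^{\ast})^{n}$, $R={}^{\circ}a+R(aa^{\ast})^{n}$, $R={}^{\circ}(a^{\ast})\oplus R(a^{\ast}a)^{n}$, and $R={}^{\circ}(a^{\ast})+R(a^{\ast}a)^{n}$ for every positive integer $n$, it suffices to pick the particular value $n=1$.

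Substituting $n=1$ in each of conditions $(2)$--$(9)$ of Theorem \ref{mp-in-ring-semigroup6} collapses $(a^{\ast}a)^{n}$ to $a^{\ast}a$ and $(aa^{\ast})^{n}$ to $aa^{\ast}$, producing exactly the nine statements in Corollary \ref{corc}. No additional work is needed beyond this substitution, since the equivalences have already been established uniformly in $n$ in the preceding theorem.

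There is no genuine obstacle here; the only thing worth double-checking is that the case $n=1$ is indeed permitted in Theorem \ref{mp-in-ring-semigroup6} (it is, since $\mathbf{N^{+}}$ contains $1$ and the proof of that theorem uses only $n\geq 1$). Thus the corollary follows at once.

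\begin{proof}
Taking $n=1$ in Theorem \ref{mp-in-ring-semigroup6} yields the equivalence of all nine conditions.
\end{proof}
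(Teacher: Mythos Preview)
Your proposal is correct and matches the paper's own approach exactly: the paper simply states that taking $n=1$ in Theorem~\ref{mp-in-ring-semigroup6} yields the corollary, with no additional argument.
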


\centerline {\bf ACKNOWLEDGMENTS} This research is supported by the National Natural Science Foundation of China (No. 11201063 and No. 11371089), the Specialized Research Fund for the Doctoral Program of Higher Education (No. 20120092110020); the Jiangsu Planned Projects for Postdoctoral Research Funds (No. 1501048B); the Natural Science Foundation of Jiangsu Province (No. BK20141327); the Foundation of Graduate Innovation Program of Jiangsu Province (No. KYZZ15-0049).

\end{document}